\documentclass[12pt]{article}
\usepackage[intlimits]{amsmath}
\usepackage{amsfonts,amssymb,amsthm,wasysym,makecell,listings,authblk,hyperref}
\usepackage[utf8]{inputenc}
\usepackage[english]{babel}
\usepackage{graphicx}
\usepackage[table,dvipsnames]{xcolor}

\newtheorem{theorem}{Theorem}
\newtheorem{definition}{Definition}

\usepackage{multirow,longtable,indentfirst,hhline,array,listings,upquote}

\lstdefinelanguage{clingo}{
    morekeywords={not,level,A,B,C,D,E,F,R,X,Z,I,N,N1,N2,N3},
    otherkeywords={:-,..,\#count,\#sum,\#heuristic,\#show,\#const,!=,==},
    sensitive=true,
    morecomment=[l]{\%},
    morestring=[b]",
}

\usepackage[a4paper, textwidth=185mm, textheight=260mm, top=20mm, left=15mm, footskip=10mm]{geometry}

\renewcommand{\le}{\leqslant}
\renewcommand{\ge}{\geqslant}

\makeatletter
\newcommand{\draw@points}[2]{%
  \expandafter\ifx\familydefault#2\familydefault \else 
  \put(#1,#2){\@shape}%
  \expandafter\draw@points 
  \fi
}

\newcommand{\coordinates}[1]{
  \begin{center}
    \begin{minipage}{0.85\textwidth}
      \centering\footnotesize\sloppy
      #1
    \end{minipage}
  \end{center}
}

\title{\bf Combinatorial Geometry of Erdős--Szekeres Type Problems: SAT/ASP Modeling and Linear Subreduction}

\author{Vitalii Koshelev~~~ Alexey Koshka}
\affil{Independent Researchers, e-mail: koshelev@mccme.ru, biocheshire@yandex.ru}

\date{}

\begin{document}

\maketitle

\begin{abstract}
\footnotesize
This paper investigates several classical and novel variations of the Erdős--Szekeres problem, including multicolored point sets, convex hexagons with a given number of interior points, and polygons with constraints on edge colors. We propose a comprehensive computational framework combining combinatorial modeling within the SAT/ASP paradigms with the geometric realization of configurations. To determine point coordinates, we developed the \textbf{linear subreduction method}. The core idea consists of combining the complete logical model of the problem with a system of geometric inequalities, followed by fixing the abscissae to linearize the constraints. This approach enables a simultaneous search for a realization across the entire space of admissible abstract configurations (signotopes) rather than examining them individually, while linearization significantly accelerates the SMT solving process. Using this framework, we established new exact values for several functions; in particular, we proved $h_{nc}(4,0; 4,0)=26$: any bicolored set of 26 points in general position must contain the vertices of an empty monochromatic quadrilateral.

\smallskip
\textbf{Keywords:} Erdős--Szekeres problem, combinatorial geometry, Answer Set Programming, linear subreduction, signotopes, SMT solving.
\end{abstract}


\section{Introduction and Statement of Results}

In 1935, Erdős and Szekeres formulated the following problem (see \cite{ES}, \cite{Low}).

\vskip+0.2cm
{\bf The First Erdős--Szekeres Problem.} {\it For any integer $n \ge 3$, find the smallest positive integer $g(n)$ such that any set of at least $g(n)$ points in the plane in general position contains a subset of $n$ points that form the vertices of a convex $n$-gon.}
\vskip+0.2cm

Recall that a set of points is in general position if no three of its elements are collinear.

In 1978, Erdős proposed a modification of the first problem (see \cite{E}).
\vskip+0.2cm
{\bf The Second Erdős--Szekeres Problem.} {\it For any integer $n \ge 3$, find the smallest positive integer $h(n)$ such that any set ${\cal X}$ of at least $h(n)$ points in the plane in general position contains a subset of $n$ points that form the vertices of a convex and empty $n$-gon, i.e., an $n$-gon containing no points of ${\cal X}$ in its interior.}
\vskip+0.2cm

These problems are classical in combinatorial geometry and Ramsey theory (see \cite{Sol}, \cite{Ram}, \cite{TRam}, \cite{Hall}). Both can be generalized as follows.

\vskip+0.2cm
{\bf The Third Erdős--Szekeres-type Problem.} {\it For any integers $n \ge 3$ and $k \ge 0$, find the smallest positive integer $h(n,k)$ such that any set ${\cal X}$ of at least $h(n,k)$ points in the plane in general position contains a subset $C$ of size $n$ forming the vertices of a convex $n$-gon $C$ with $|(\operatorname{conv}(C) \cap {\cal X}) \setminus C| \le k$, i.e., this $n$-gon contains at most $k$ other points of ${\cal X}$ in its interior.}
\vskip+0.2cm

Devillers, Hurtado, Károlyi, and Seara \cite{chrom} proposed a generalization of the first two problems by considering multicolored point sets in the plane. In our work, we study the following formulation.

\vskip+0.2cm
{\bf Erdős--Szekeres-type Problem for Bicolored Sets.} {\it For any integers $n_1 \ge 3, k_1 \ge 0, n_2 \ge 3$, and $k_2 \ge 0$, find the smallest positive integer $h(n_1,k_1;n_2,k_2)$ such that any bicolored set ${\cal X}$ of at least $h(n_1,k_1;n_2,k_2)$ points in general position contains either a subset of size $n_1$ forming a convex $n_1$-gon of the first color with at most $k_1$ interior points, or a subset of size $n_2$ forming a convex $n_2$-gon of the second color with at most $k_2$ interior points. We also define $h_{nc}(n_1,k_1;n_2,k_2)$ for the case where the convexity condition is not required.}
\vskip+0.2cm

While we assumed $n_1, n_2 \ge 3$, the problem remains well-defined for $n_1=2$ or $n_2=2$, where a 2-gon is simply a pair of points. Finally, one can consider an arbitrary number of colors, defining the values $h(n_1,k_1;n_2,k_2;n_3,k_3;\dots)$ and $h_{nc}(n_1,k_1;n_2,k_2;n_3,k_3;\dots)$.

Additionally, we define $h_{isl}(n_1,k_1;n_2,k_2;n_3,k_3;\dots)$ for monochromatic $n$-islands with at most $k$ interior points, where an $n$-island is an arbitrary subset $C \subset {\cal X}$ such that $|C|=n$ and $|(\operatorname{conv}(C) \cap {\cal X}) \setminus C| \le k$.

For convenience, we introduce the generalized notation $h_*(\cdots)$ to refer to any of the above functions ($h, h_{nc}, h_{isl}$) unless the context dictates otherwise.

For the first Erdős--Szekeres problem, it is known that:
$$g(3)=3, \quad g(4)=5, \quad g(5)=9 \text{ \cite{ES}}, \quad g(6)=17 \text{ \cite{SL}}.$$

The latter result was first proved by Szekeres and Peters in 2006 using an exhaustive computer search that took 1500 hours. In 2019--2020, Marić \cite{maric} and Scheucher \cite{scheucher2020} applied SAT solvers to reduce the verification time to one hour.

For arbitrary $n$, the upper bound has been repeatedly improved. We have summarized all known results in the following table. Ultimately:
$$ \text{\cite{Low} } 2^{n-2}+1 \le g(n) \le 2^{n + O(\sqrt{n \log n})} \text{ \cite{HMPT20}}. $$

\begin{table}[h!]
\centering
\footnotesize
\renewcommand{\arraystretch}{1.5}
\begin{tabular}{|l|c|c c c c c c c c c c|}
\hline
\textbf{Year, Authors} & \textbf{Formula} & \textbf{3} & \textbf{4} & \textbf{5} & \textbf{6} & \textbf{7} & \textbf{8} & \textbf{9} & \textbf{10} & \textbf{11} & \textbf{12} \\ \hline
1935, Erdős--Szekeres \cite{ES} & $\binom{2n-4}{n-2} + 1$ & 3 & 7 & 21 & 71 & 253 & 925 & 3433 & 12871 & 48621 & 184757 \\
1998, Chung--Graham \cite{CG98} & $\binom{2n-4}{n-2}$ & & 6 & 20 & 70 & 252 & 924 & 3432 & 12870 & 48620 & 184756 \\
1998, Kleitman--Pachter \cite{KP98} & $\binom{2n-4}{n-2} - 2n + 7$ & 3 & 5 & 17 & 65 & 245 & 915 & 3421 & 12857 & 48605 & 184739 \\
1998, Tóth--Valtr \cite{TV98} & $\binom{2n-5}{n-2} + 2$ & & 5 & 12 & 37 & 128 & 464 & 1718 & 6437 & 24312 & 92380 \\
2005, Tóth--Valtr \cite{TV05} & $\binom{2n-5}{n-2} + 1$ & & & 11 & 36 & 127 & 463 & 1717 & 6436 & 24311 & 92379 \\
2015, Vlachos \cite{Vl15} & $(\frac{29}{32}+o(1)) \binom{2n-5}{n-2}$ & & & & 33 & 114 & 414 & 1536 & 5765 & 21804 & 82942 \\
2015, Mojarrad--Vlachos \cite{MV15} & $\binom{2n-5}{n-2}-\binom{2n-8}{n-3}+2$ & & & 11 & 33 & 113 & 408 & 1508 & 5645 & 21309 & 80490 \\
2015, Norin--Yuditsky \cite{NY15} & $(\frac{7}{8}+o(1)) \binom{2n-5}{n-2}$ & & & 10 & 28 & 92 & 324 & 1178 & 4358 & 16304 & 61492 \\
2017, Suk \cite{Suk17} & $2^{n + O(n^{2/3}\log n)}$ & \multicolumn{10}{c|}{\textit{asymptotic bound}} \\
2020, Holmsen et al. \cite{HMPT20} & $2^{n + O(\sqrt{n \log n})}$ & \multicolumn{10}{c|}{\textit{asymptotic bound}} \\
\textbf{1935, Conjecture} & $2^{n-2} + 1$ & \textbf{3} & \textbf{5} & \textbf{9} & \textbf{17} & \textbf{33} & \textbf{65} & \textbf{129} & \textbf{257} & \textbf{513} & \textbf{1025} \\ \hline
\end{tabular}
\caption{\footnotesize Comparison of known upper bounds for $g(n)$ for the Erdős--Szekeres convex polygon problem. Empty cells correspond to $n$ values outside the applicability of the methods.}
\end{table}

The second problem is completely solved. The following results have been proven:
$$h(3)=3, \quad h(4)=5, \quad h(5)=10 \text{ \cite{Harb}}.$$

The history of finding the value of $h(6)$ evolved from early analytical investigations into rigorous formal verification. The first lower bounds were obtained through computer search: in 1989, Overmars, Scholten, and Vincent \cite{Over0} presented configurations of 26 points without empty hexagons. In 2003, Overmars \cite{Over}, using an optimized algorithm for finding empty polygons, improved this result by constructing a set of 29 points, establishing the bound $h(6) \ge 30$.

The question of whether $h(6)$ is finite remained open for nearly thirty years until independent proofs of existence were published in 2007--2008. Nicolas \cite{Nicolas} presented a proof with an upper bound of $h(6) \le g(25)$, and Gerken \cite{Gerken} with $h(6) \le g(9)$. Soon after, Valtr \cite{Valtr} proposed a simplified version of Gerken's argument, giving $h(6) \le g(15)$. However, these values were orders of magnitude larger than the known lower bound\footnote{In 2007, one of the authors announced an estimate $h(6) \le g(8)$. The work was planned in two parts, the first of which was published in 2009 \cite{MSb}. However, during the preparation of the second part, technical gaps in the proof were identified. Since the exact value $h(6) = 30$ has now been established by computational methods, the publication of the corrected second part was deemed unnecessary.}.

The exact value was only established in 2024 by Scheucher and Heule \cite{h6}: by combining geometric theory with the power of SAT solvers (CaDiCaL \cite{cadical}), they proved that any set of 30 points in general position contains a convex empty hexagon. The definitive result on this matter was given in the work by Subercaseaux et al. \cite{h6_formal}. The authors presented a full formal verification of Scheucher and Heule's result in the Lean 4 system, mathematically proving the correctness of the geometric encoding and verifying the logical inference. This elevated the result from the category of computer calculations to the status of a formally proven theorem.

For $n \ge 7$, the value $h(n)$ does not exist, as proven by Horton in 1983 (see \cite{Hort}). This leads to the third Erdős--Szekeres type problem and, in particular, the question of the existence of $h(n,k)$ for $n > 7$.

For this problem, the obvious inequalities $g(n) \le h(n,k) \le h(n)$ hold if the corresponding values exist. Furthermore:
$$h(n) = h(n, 0) \ge h(n, 1) \ge h(n, 2) \ge \dots \ge g(n).$$
There exists a maximum value $\overline{k} = \overline{k}(n)$ such that $h(n, \overline{k}) > g(n)$, while for all $k > \overline{k}$, $h(n, k) = g(n)$ (for example, it is obvious that $h(n, g(n)-n) = g(n)$).

For small values of $n$, the following results are known:
$$h(3, k) = 3, \quad h(4, k) = 5, \quad h(5, 0) = 10, \quad h(5, k \ge 1) = 9.$$
The last equality is due to the fact that any convex pentagon containing two or more points of the set in its interior always contains a smaller convex and empty pentagon.

Deeper results on the third problem were obtained in the works of Sendov \cite{Sen} and Nyklova \cite{Nyk}. Using Horton constructions \cite{Hort}, these papers prove the non-existence of $h(n, k)$ for certain values of $k$ when $n > 7$. If $\underline{k}(n)$ denotes the maximum value of $k$ for which $h(n, k)$ does not exist for a given $n$, Sendov and Nyklova obtained the estimate $\underline{k}(n) \ge (\sqrt[4]{2} + o(1))^n$. One of the authors established a significantly stronger exponential estimate: $\underline{k}(n) \ge (2 + o(1))^n$ (see \cite{Kosh}). Specifically, it was shown that for odd $n$, the following does not exist:
$$ h\left(n, \binom{n-7}{\frac{n-7}{2}} - 1\right), $$
and for even $n$:
$$ h\left(n, 2\binom{n-8}{\frac{n-8}{2}} - 1\right). $$

In Nyklova's paper \cite{Nyk}, it was also proven that $h(6,6) = g(6) = 17$ and the estimate $h(6,5) \le 19$ was obtained\footnote{In several prior publications, including those of one of the authors, it was erroneously claimed that the result $h(6, 5) = 19$ in \cite{Nyk} was due to a computational error. This misunderstanding arose from the ambiguity of the author's notation: in the original work \cite{Nyk}, the value 19 is given only as an upper bound, although it is formally written with an equals sign. This clarification is intended to correct the tradition of incorrect citation and interpretation of this result in the literature.}.

In 2008, one of the authors showed that $h(6,1) \le g(7) \le 127$ (see \cite{FPM}). In 2010, using a modification of the Szekeres-McKay-Peters algorithm \cite{SL}, the same author found the exact values of $h(6,2)$ and $h(6,1)$ (see \cite{h61_comp}). In the present study, we present a new computer proof obtained by the method of Answer Set Programming (ASP). The verification time for these cases has been significantly reduced to 70 and 190 minutes, respectively.

\begin{theorem}\label{h61}
The following equalities hold: $h(6, \ge 2) = 17$, $h(6, 1) = 18$.
\end{theorem}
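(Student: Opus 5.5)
The plan is to split the statement into lower bounds and upper bounds.

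\emph{Lower bounds.} Since $h(6,k)\ge g(6)=17$ for every $k$, the equality $h(6,\ge 2)=17$ needs only an upper bound. For $h(6,1)\ge 18$ we must exhibit a set of $17$ points in general position in which every convex hexagon contains at least two points in its interior. We will obtain such a set from the ASP model described below: we run it with $17$ points and the constraint ``no convex hexagon with at most one interior point''. The solver returns an abstract configuration (a signotope, i.e.\ an orientation assignment $\chi(a,b,c)$ on triples), and we realize it with explicit integer coordinates. Correctness of the realization is then a finite check of all $\binom{17}{6}$ hexagons with exact orientation predicates. A good starting configuration is a perturbation of a known extremal $16$-point set for $g(6)$ with one point added.

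\emph{Upper bounds.} By monotonicity in $k$, it suffices to show two things: every $17$-point set in general position has a convex hexagon with at most $2$ interior points, and every $18$-point set has one with at most $1$ interior point. We encode point sets combinatorially in the standard Scheucher--Heule manner. Sort the points by $x$-coordinate, use Boolean triple-orientation variables, and impose the signotope axioms, which are necessary for realizability. We may normalize so that point $1$ is extremal and the others are sorted around it (the usual symmetry breaking). Convexity of a $6$-tuple is expressed through cups and caps. The statement ``point $p$ lies inside triangle $abc$'' is a Boolean function of the orientations of $pab$, $pbc$, $pca$. The count of interior points of a hexagon is then bounded by a cardinality aggregate (\#count in clingo). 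The program states ``no convex hexagon has at most $k$ interior points'', and an UNSAT answer gives the upper bound. Because every realizable set yields a model, UNSAT of this relaxation suffices and no realizability issues arise here.

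\emph{Main obstacle.} The hard step is making the $n=18$, $k=1$ instance tractable. Naively, each of the $\binom{18}{6}$ hexagons needs a count over the remaining points, which is costly. We will reduce this in three ways. First, we restrict attention to hexagons that are minimal in the sense that no smaller convex hexagon with fewer interior points sits inside them; for example, a hexagon containing $\ge 2$ points can often be replaced by a hexagon through an interior point. Second, we decompose the interior count by triangulating from the leftmost vertex and summing auxiliary ``number of points in triangle'' variables. Third, we add symmetry breaking on the convex hull layer.

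If a single solver call is still too slow, we will split on the size of the convex hull and the position of the first point and solve the cases in parallel. That is what should bring the running times into the range of the $70$ and $190$ minutes quoted in the paper.
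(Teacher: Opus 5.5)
Your plan is essentially the paper's proof. It has the same three parts: the trivial bound $h(6,k)\ge g(6)=17$; an explicit $17$-point set for $h(6,1)\ge 18$; and UNSAT of a clingo signotope encoding, with all triples through the first point fixed, for $(k,n)=(2,17)$ and $(1,18)$. The paper obtains its $17$-point coordinates by linear subreduction on the grid $x_i=i$, searching over all admissible signotopes at once rather than realizing one fixed solver output.

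The one real difference is the efficiency device. Instead of counting the whole hexagon, the paper counts interior points only in one alternating triangle (e.g.\ $ACE$ for a $6$-cup). This is the concrete form of your ``minimality'' idea: replacing $B$ by the point of the ear $ABC$ nearest to line $AC$ (and likewise for the other two ears) turns a convex hexagon whose base triangle has at most $k$ interior points into a convex hexagon with at most $k$ interior points, so this stronger prohibition loses nothing.
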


Now let us discuss the version of Erdős--Szekeres problem for bicolored sets. It is easy to see that $h(n_1,\infty;n_2,\infty)=g(n_1)+g(n_2)-1$.
Devillers et al. in \cite{chrom} proved that among $N\ge 5$ points of two colors, there must be $\lceil N/4\rceil-2$ monochromatic disjoint triangles (i.e., triangles intersecting at most by a common edge).
From this, it follows that:
$$h(3,0;3,0)=9, \quad h(3,0;3,0;2,0)=13+1=14,$$
$$h(3,0;3,0;2,0;2,0)=h(3,0;3,0;3,\infty)=h(3,0;3,0;3,8)=17+2=19,$$
$$h(3,0;3,0;\underbrace{2,0;\dots;2,0}_t)=5t+9.$$
Also, using Horton set colorings, the authors of \cite{chrom} proved that $h(3,0;3,0;3,0)$ and $h(5,0;5,0)$ do not exist. Furthermore, their proof implies that $h(3,0;5,0)$ does not exist.
However, $h(3,0;4,0)\le h(6)$ exists, since coloring the vertices of an empty hexagon in two colors always forms the required convex triangle or quadrilateral. The question of the existence of $h(4,0;4,0)$ and $h_{isl}(4,0;4,0)$ remains open.

Lower bounds for $h(4,0;4,0)$ have been repeatedly improved: from 18 points in \cite{chrom} to the results of Brass (20 points \cite{Brass_ref}), Friedman (30 points \cite{Friedman_ref}), van Gulik (32 points \cite{Gulik_ref}), Huemer and Seara (36 points \cite{HS_ref}). Finally, one of the authors of the present work established that $h(4,0;4,0)\ge 47$ by constructing an example with 46 points \cite{h46}.

In 2016, Basu et al. \cite{basu} found the values:
$$h(3,1;3,1)=6, \quad h(3,1;3,1;3,1)=13,$$
and proved that $h(3,c-1;\ldots;3,c-1)\le \max\{c^2+1,6\}$, $h(3,c-2;\ldots;3,c-2)\le c^2+c+1$, while the value $h(3,\lfloor\frac{c-3}{2}\rfloor;\ldots;3,\lfloor\frac{c-3}{2}\rfloor)$ does not exist.

In 2019, Cravioto-Lagos et al. \cite{cravioto} showed that:
$$h(3,c-3;\ldots;3,c-3)\le \left\lfloor\frac{2c(c+\frac{1}{c-2})}{\frac{c-2}{c-1}-\frac{2c-3}{(c-1)(c-2)^2}}\right\rfloor+1,$$
$$h(4,2c-3;\ldots;4,2c-3)\le c\cdot g(4c+1).$$

Due to the complexity of the existence problem for $h(4,0;4,0)$, Aichholzer et al. considered in 2010 its simplification (see \cite{nconv4}), where the convexity condition is not mandatory, and proved that $h_{nc}(4,0;4,0)\le 2760$. The example for the convex case from \cite{chrom}, consisting of 18 points, applies here as well.

The latest lower bounds for $h_{nc}(4,0;4,0)$ and $h_{isl}(4,0;4,0)$ were obtained within the EuroGIGA ComPoSe project under Aichholzer's leadership. Specifically, the computer search conducted by project participants allowed the construction of a set of 22 points for $h_{nc}(4,0;4,0)$ and an example of 35 points for $h_{isl}(4,0;4,0)$ \footnote{\url{https://www.eurogiga-compose.eu/posezo.php}}\footnote{In his Ph.D. thesis, Scheucher \cite{scheucher2020} claimed to have found examples of 48, 36, and 24 points for $h(4,0;4,0)$, $h_{isl}(4,0;4,0)$, and $h_{nc}(4,0;4,0)$ respectively. However, at the time of writing, the original web repository of the project \url{https://page.math.tu-berlin.de/~scheuch/research/sat_vs_bicolored_point_sets/} is unavailable, and the thesis does not provide full sets of coordinates, making independent verification of these results impossible.}.

We have constructed an example of a set with 25 points containing no empty monochromatic quadrilaterals and proved the absence of such configurations for $N\ge 26$, establishing the exact value $h_{nc}(4,0;4,0)=26$.

In 2018, Liu and Zhang \cite{liu} established that:
$$h_{nc}(4,2;4,2)=9, \quad h_{nc}(4,1;4,1)=11,$$
$$h_{nc}(4,2;4,2;4,2)\le 120, \quad h_{nc}(4,2c-3;\ldots;4,2c-3)\le 4c^2+1$$
and that the value $h_{nc}(4,2\lfloor\frac{c-1}{2}\rfloor-1;\ldots;4,2\lfloor\frac{c-1}{2}\rfloor-1)$ does not exist.

Note that monochromatic variants of the problem for $h_{nc}$ and $h_{isl}$ are trivial, since $h_{nc}(n,0)=h_{isl}(n,0)=n$.

The main results of this study are formulated below. The values of the functions $h_*(\cdots)$ obtained through computer modeling and analytical derivation are presented in the following statements.

\begin{theorem}
The values of the functions $h_{nc}(4,k_1;4,k_2)$, $h_{isl}(4,k_1;4,k_2)$, and $h(4,k_1;4,k_2)$ are given in the tables:
\begin{table}[h!]
\centering
\begin{tabular}{|c|cccc| c |c|cccc| c |c|ccccc|}
\hhline{-----~-----~------}
\multicolumn{5}{|c|}{$h_{nc}(4,k_1;4,k_2)$} & & \multicolumn{5}{c|}{$h_{isl}(4,k_1;4,k_2)$} & & \multicolumn{6}{c|}{$h(4,k_1;4,k_2)$} \\
\hhline{-----~-----~------}
& 0 & 1 & 2 & 3 & & & 0 & 1 & 2 & 3 & & & 0 & 1 & 2 & 3 & 4 \\
\hhline{-----~-----~------}
0 & 26 & 15 & 14 & 12 & & 0 & 36-& 22 & 16 & 13 & & 0 & 47-   & 29-& 23-25 & 20-21 & 17 \\
1 & 15 & 11 & 10 & 9  & & 1 & 22 & 13 & 11 & 9  & & 1 & 29-   & 18 & 16 & 15 & 13 \\
2 & 14 & 10 & 9 & 9   & & 2 & 16 & 11 & 9 & 9   & & 2 & 23-25 & 16 & 12 & 12 & 11 \\
3 & 12 & 9 & 9 & 7    & & 3 & 13 & 9 & 9 & 7    & & 3 & 20-21 & 15 & 12 & 11 & 11 \\
\hhline{-----~-----~}
\multicolumn{12}{c|}{}                            & 4 & 17 & 13 & 11 & 11 & 9 \\
\hhline{~~~~~~~~~~~~------}
\end{tabular}
\end{table}
\end{theorem}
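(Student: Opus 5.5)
Every entry $v$ in the tables needs two things: a lower bound $h_*(4,k_1;4,k_2)\ge v$, given by an explicit bicolored set of $v-1$ points in general position with no bad monochromatic quadrilateral, and an upper bound $h_*(4,k_1;4,k_2)\le v$, showing that every bicolored set of $v$ points contains one. Entries written as ``$m-$'' carry only a lower bound, and intervals carry both a lower and an upper bound. Before any computation we reduce the work in three ways.

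\textbf{Symmetry and monotonicity.} The values are symmetric in the two colors, so only $k_1\le k_2$ is needed. Each function is nonincreasing in $k_1$ and in $k_2$. The chain $h_{nc}\le h_{isl}\le h$ holds for equal parameters, because a convex quadrilateral with at most $k$ interior points is a $4$-island with at most $k$ points of ${\cal X}$ in its hull, and every $4$-island gives a (possibly nonconvex) quadrilateral with at most $k$ interior points. Together these let one example or one upper bound settle several cells.

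\textbf{Analytic cells.} Some cells follow from known results. For large $k$ we use $g(4)=5$: any $9$ points contain $5$ of one color, hence a convex quadrilateral. Its interior contains at most $5$ points, and a convex quadrilateral with two or more interior points of the same color can be shrunk to a smaller one, in the spirit of the remark used for $h(5,k\ge1)=9$. This gives the $9$'s at large $k$. The entries $h_{nc}(4,1;4,1)=11$ and $h_{nc}(4,2;4,2)=9$ come from Liu and Zhang \cite{liu}. The diagonal small values can be checked by counting: $7$ points give $4$ of one color, and with $k=3$ every $4$-subset of that color is admissible. For $h_{nc}$ and $h_{isl}$ the extra three points are the only possible interior points, while the $7$ in the $h$ table needs checking against $g(4)$. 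The remaining values we do not expect to derive by hand.

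\textbf{Upper bounds.} We encode the combinatorial type of each point set as a signotope: orientation variables for triples $i<j<k$ with points sorted by $x$-coordinate, subject to the standard signotope axioms on $4$-tuples. Following Scheucher and Heule \cite{h6}, we add lexicographic and convex-hull symmetry breaking, with point $1$ on the hull and the others sorted around it. Color variables are added, and ``point $l$ inside triangle $ijk$'' is expressed through the orientations. A $4$-set with at most $k$ interior points is then characterized as follows. In the convex case its hull is a quadrilateral with at most $k$ interior points. In the nonconvex case one point lies inside the triangle of the other three and that triangle contains at most $k+1$ points, so admissible $4$-sets are expressed via emptiness of triangles plus cardinality constraints. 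We forbid every monochromatic admissible quadrilateral and show that the ASP/SAT instance with $N=v$ points is UNSAT. Since signotopes over-approximate realizable configurations, UNSAT is a valid proof.

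\textbf{Lower bounds.} For $N=v-1$ we obtain a SAT model and then must realize it geometrically. Here we use the paper's linear subreduction: fix the abscissae, for instance $x_i=i$ or small perturbations of it, so that every orientation inequality becomes linear in the ordinates, and solve jointly with the logical model in SMT. The search then ranges over all admissible signotopes simultaneously, not one at a time. Coordinates are verified independently in exact arithmetic.

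The main obstacle is the largest cell, $h_{nc}(4,0;4,0)=26$. Its UNSAT proof at $N=26$ needs strong symmetry breaking, and probably cube-and-conquer splitting, to finish. Its realization at $N=25$ is also hard, because most SAT models of that size are likely non-realizable, and that is exactly where linear subreduction is needed. For the open convex entries, such as $47-$ and $29-$, only the constructions are claimed, since the solver does not terminate.
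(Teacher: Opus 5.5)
Your overall route matches the paper's. Upper bounds come from UNSAT of a signotope SAT/ASP encoding, with hull symmetry breaking of the $L_{0bc}=\text{true}$ type and, for the hardest cells, splitting on the colors of an initial segment. Lower bounds come from realizations of $(v-1)$-point models found by linear subreduction.

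\textbf{The gap.} It lies in the one place where the three tables actually differ: which monochromatic $4$-sets are forbidden.
\begin{itemize}
\item Your nonconvex clause is exactly the $4$-island condition: one point $d$ inside the triangle $abc$ of the other three, with at most $k+1$ points in $abc$. You present it as the single characterization of ``a $4$-set with at most $k$ interior points''. That is correct only for $h_{isl}$.
\item For $h_{nc}$ the object is a simple quadrilateral on $\{a,b,c,d\}$, i.e.\ the triangle $abc$ with one of $abd$, $bcd$, $cad$ cut away. So a nonconvex $4$-set must be forbidden as soon as \emph{some two} of these three subtriangles together contain at most $k$ points. This is what the paper's additional \texttt{nc} constraints encode, pair by pair.
\item For $h$, nonconvex $4$-sets must not be forbidden at all.
\end{itemize}
With your clause, the $h_{nc}$ instances forbid too little, and you would be computing $\tilde h_{isl}$. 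For $(k_1,k_2)=(0,0)$ the instance with $N=26$ is satisfiable. A witness is the known $35$-point bicolored set without monochromatic empty $4$-islands, cut down to $26$ points by repeatedly deleting a hull vertex, which preserves the property. So the UNSAT on which your proof of the central cell $h_{nc}(4,0;4,0)=26$ rests never arrives. If the same clause were used for the $h$ table, it would forbid too much and give false ``upper bounds'', e.g.\ $13$ instead of $18$ at $(1,1)$.

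\textbf{Smaller slips in the analytic shortcuts.} These do not matter once every cell is computed.
\begin{itemize}
\item Your shrinking step needs two interior points of the quadrilateral's own color, and these need not exist. Take five points of color~$1$ with one inside the hull of the other four, and put all four points of color~$2$ inside that hull too. The hull quadrilateral then has five interior points, only one of color~$1$. The fix is to use a convex quadrilateral having that inner point as a vertex; its interior can contain only points of color~$2$.
\item Even with the fix, the argument yields only $h(4,4;4,4)\le 9$. It does not give the $9$'s of the $h_{nc}$ and $h_{isl}$ tables, such as $h_{isl}(4,2;4,2)$ or $h_{nc}(4,1;4,3)$.
\item There is no $7$ in the $h$ table. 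Every entry there is at least $h(4,\infty;4,\infty)=g(4)+g(4)-1=9$.
\end{itemize}
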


\begin{theorem}
Values of $h_{nc}(4,k_1;3,k_2)$, $h_{isl}(4,k_1;3,k_2)$, $h(4,k_1;3,k_2)$, and $h_{nc}(5,k_1;3,k_2)$ are given in the tables below. Table rows correspond to the parameter $k_1$, and columns to $k_2$.
\begin{table}[h!]
\centering
\begin{tabular}{|c|cccc| c |c|cccc| c |c|ccccc| c |c|ccccc|}
\hhline{-----~-----~------~------}
\multicolumn{5}{|c|}{$h_{nc}(4,k_1;3,k_2)$} & & \multicolumn{5}{c|}{$h_{isl}(4,k_1;3,k_2)$} & & \multicolumn{6}{c|}{$h(4,k_1;3,k_2)$} & & \multicolumn{6}{|c|}{$h_{nc}(5,k_1;3,k_2)$} \\
\hhline{-----~-----~------~------}
& 0 & 1 & 2 & 3 & & & 0 & 1 & 2 & 3 & & & 0 & 1 & 2 & 3 & 4 & & & 0 & 1 & 2 & 3 & 4 \\
\hhline{-----~-----~------~------}
0 & 14 & 10 & 10 & 9 & & 0 & 17 & 11 & 11 & 9 & & 0 & 26 & 14 & 13 & 12 & 11 & & 0 & 20 & 13 & 12 & 12 & 11 \\
1 & 11 & 8 & 8 & 8 & & 1 & 12 & 9 & 8 & 8 & & 1 & 14 & 11 & 9 & 9 & 9 & & 1 & 15 & 11 & 10 & 10 & 10 \\
2 & 9 & 7 & 7 & 6 & & 2 & 10 & 7 & 7 & 6 & & 2 & 11 & 9 & 8 & 8 & 7 & & 2 & 11 & 9 & 8 & 8 & 7 \\
\hhline{-----~-----~------~------}
\end{tabular}
\end{table}
\end{theorem}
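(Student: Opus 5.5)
Each table entry $h_*(\cdots)=N$ is a pair of claims: a lower bound, given by an explicit bicolored set of $N-1$ points in general position with no forbidden monochromatic configuration, and an upper bound, asserting that every bicolored set of $N$ points contains one. I would prove them by the computational framework outlined in the abstract, reusing it for every entry. Monotonicity reduces the work considerably: $h_*(\cdots)$ is nonincreasing in each $k_i$, and the functions are ordered, since an $n$-gon with at most $k$ interior points is in particular an $n$-island, which gives $h_{nc}\le h_{isl}\le h$ in each slot. Consequently one extremal example often certifies several lower bounds at once, and one unsatisfiability proof often certifies several upper bounds. Small entries can be cross-checked by hand. For example, $h(4,k_1;3,k_2)\ge 7$ for $k_1=2$, $k_2=4$ follows by coloring part of a convex configuration, and the entries with large $k$ in the triangle column reduce to $g(4)=5$ and $g(3)=3$ arguments about color classes.

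For the upper bounds, I would encode the problem combinatorially as a signotope (triple orientation) on $N$ points sorted by abscissa. I would use boolean variables $\chi_{abc}$ for the orientations, constrained by the standard signotope axioms on 4-tuples, together with a color variable per point. Next I would define auxiliary variables: $\mathrm{in}_{abc,d}$ (point $d$ lies inside triangle $abc$), expressed via orientations and the $x$-order; convexity of a quadruple, as the absence of a point of the quadruple inside the triangle of the other three; and interior counts. For $h_{nc}$ and $h_{isl}$, a monochromatic quadruple or triple spans a region whose interior is a union of triangles, so "at most $k$ interior points" becomes a cardinality constraint over the $\mathrm{in}$ variables. For $n=5$ I would use the standard fact that any five points in general position determine their hull via triangles. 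Each forbidden configuration is added as a constraint, and I would write it in clingo with \#count aggregates. Unsatisfiability at size $N$, checked by an ASP/SAT solver with symmetry breaking (fix point 1 as the leftmost hull vertex, sort the remaining points angularly, break the color symmetry when $k_1=k_2$), then proves the upper bound for all realizable and even all abstract configurations.

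For the lower bounds, I would run the same model at size $N-1$, where it is satisfiable, and then realize a solution geometrically by linear subreduction. I would fix the abscissae $x_i$ (say integers, or a perturbed convex-position pattern) and treat the ordinates $y_i$ as unknowns. Every orientation predicate then becomes linear in $y$, and I would add these linear constraints to the logical model in an SMT solver, so that the search ranges over all admissible signotopes simultaneously. The resulting coordinates would be listed explicitly and verified independently in exact arithmetic.

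The main obstacle is the pair of hard cases: the upper bound $h_{nc}(4,0;4,0)\le 26$, where the search space is enormous, and realizing large extremal examples such as 25 points in the non-convex case. Here I would rely on strong symmetry breaking and on incremental lemmas (e.g.\ counting empty monochromatic triangles in each color class) to cut the space. I would also accept that entries marked as bounds (such as $47-$ or $23$--$25$) stay unresolved, since there only the construction side succeeds.
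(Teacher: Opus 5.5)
Your pipeline is essentially the paper's. UNSAT of the signotope model at size $N$ gives $h_*\le\tilde h_*\le N$. A satisfiable instance at $N-1$, turned into coordinates by fixing abscissae and solving the linearized orientation constraints, gives $h_*\ge N$. The slotwise ordering $h_{nc}\le h_{isl}\le h$ is also correct. However, there are concrete defects.

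The main defect is in the encoding: you treat $h_{nc}$ and $h_{isl}$ alike, as ``the region spanned by the quadruple''.
\begin{itemize}
\item For a non-convex quadruple, $h_{isl}$ uses the hull triangle, with the inner vertex of the quadruple excluded from the count; the paper does this by raising the limit by one.
\item $h_{nc}$ instead may use \emph{any} of the three simple quadrilaterals through the four points, each split into two triangles. The quadruple is forbidden as soon as \emph{one} of them has at most $k$ interior points; these are the six \texttt{nc} rules of the ASP model.
\end{itemize}
If you encode ``the region'' as the hull, both tables come out as $h_{isl}$. Yet the tables differ ($14$ vs.\ $17$, $11$ vs.\ $12$, $9$ vs.\ $10$). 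Likewise, for $h_{nc}(5,k_1;3,k_2)$ the hull of the five points gives the island condition; you must range over all simple pentagons on the five points.

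Second, your last paragraph addresses the preceding theorem. $h_{nc}(4,0;4,0)$ is not part of this statement, and these tables contain no interval entries, so every value must be certified on both sides. The hard entry here is $h(4,0;3,0)=26$.
\begin{itemize}
\item Upper bound: the paper needed $1193$ CPUh of \texttt{clingo}, or $353$ CPUh of Glucose after splitting into $2387$ subformulas by fixing the colors of the first $13$ points. Colorings with five consecutive first-color or three consecutive second-color points were discarded.
\item Lower bound: the $25$-point example was found not by linear subreduction but by local stochastic search with imposed $3$-fold symmetry.
\end{itemize}
Fixing abscissae can lose realizations, so you cannot assume linear subreduction succeeds at every $N-1$; you need a fallback for this entry.

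Two smaller points. There is no color symmetry to break anywhere in this theorem, since the two colors carry different shapes. Also, your cross-check for $h(4,2;3,4)\ge 7$ cannot use a convex configuration. On $6$ points the second color has at most two points, because every triangle has at most $3$ interior points. So the first color has at least four, and any four of them in convex position would form a quadrilateral with at most $2$ interior points. The first color must therefore be a triangle with a same-colored point inside.
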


\begin{theorem}
The values of the function $h(3,k_1;3,k_2;3,k_3)$ are presented in the tables. The index $k_1$ determines the block number (from left to right, $k_1=0, 1, 2, 3, 4$), the rows correspond to $k_2$, and the columns correspond to $k_3$.
\begin{table}[h!]
\centering
\footnotesize
\begin{tabular}[c]{|r|*{9}{c}|c|r|*{4}c|c|r|*{3}c|c|r|*{2}c|c|r|*{1}c|}
\hhline{----------}
& 0 & 1 & 2 & 3 & 4 & 5 & 6 & 7 & 8 \\
\hhline{----------~-----}
0 & $-$& 33-& 26-& 23 & 21 & 20 & 20 & 20 & 19   & &   & 1 & 2 & 3 & 4 \\
\hhline{~~~~~~~~~~~-----~----}
1 & 33-& 19 & 17 & 16 & 15 & 15 & 15 & 14 & 14  & & 1 & 13 & 12 & 12 & 11 & &    & 2 & 3 & 4 \\
\hhline{~~~~~~~~~~~~~~~~~----~---}
2 & 26-& 17 & 15 & 14 & 14 & 14 & 13 & 13 & 13  & & 2 & 12 & 11 & 11 & 10 & & 2 & 10 & 9 & 9 & &   & 3 & 4 \\
\hhline{~~~~~~~~~~~~~~~~~~~~~~---~--}
3 & 23 & 16 & 14 & 13 & 13 & 13 & 12 & 12 & 12 & & 3 & 12 & 11 & 10 & 10 & & 3 & 9 & 9 & 9  & & 3 & 8 & 8 & & & 4 \\
\hhline{~~~~~~~~~~~~~~~~~~~~~~~~~~--}
4 & 21 & 15 & 14 & 13 & 12 & 12 & 12 & 12 & 12 & & 4 & 11 & 10 & 10 & 9  & & 4 & 9 & 9 & 8  & & 4 & 8 & 8 & & 4 & 7 \\
\hhline{~~~~~~~~~~~-----~----~---~--}
5 & 20 & 15 & 14 & 13 & 12 & 12 & 12 & 12 & 12 \\
6 & 20 & 15 & 13 & 12 & 12 & 12 & 11 & 11 & 11 \\
7 & 20 & 14 & 13 & 12 & 12 & 12 & 11 & 11 & 11 \\
8 & 19 & 14 & 13 & 12 & 12 & 12 & 11 & 11 & 11 \\
\hhline{----------}
\end{tabular}
\end{table}
\end{theorem}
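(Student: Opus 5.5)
Each entry $N$ in the table for $h(3,k_1;3,k_2;3,k_3)$ requires two things: a lower bound $h(3,k_1;3,k_2;3,k_3)>N-1$, shown by an explicit tricolored set of $N-1$ points with no forbidden monochromatic triangle, and an upper bound $\le N$, shown by an exhaustive proof that no such set of $N$ points exists. Entries marked ``$N-$'' need only the lower bound, and ``$-$'' records the known non-existence of $h(3,0;3,0;3,0)$ from \cite{chrom}. Before computing anything, we reduce the table using symmetry and monotonicity. The function is symmetric in the three pairs $(3,k_i)$, so it suffices to fill $k_1\le k_2\le k_3$; this is why the blocks are triangular. It is nonincreasing in each $k_i$. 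It also stabilises once $k_i$ exceeds the number of points that can lie inside a triangle in a set of the relevant size. Hence a certificate for one cell propagates to its neighbours, and only the ``corner'' entries need independent proofs.

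For the upper bounds, we encode the problem in the SAT/ASP framework over abstract order types (signotopes) of $N$ points in general position. We use orientation variables for triples, with the signotope axioms on 4-tuples, and assume without loss of generality that the points are sorted by abscissa, with point $1$ on the convex hull and the others sorted around it. We add three colour variables per point with an exactly-one constraint. Containment of a point $p$ in triangle $abc$ is expressed as a conjunction of orientations. For each triple of colour $i$ we add a cardinality constraint saying that at least $k_i+1$ points lie inside it; in ASP this is a \texttt{\#count} aggregate. We break colour symmetry whenever $k_i=k_j$. Unsatisfiability of this program for size $N$ gives the upper bound, because every realizable configuration is a signotope. We would first try to speed this up by using only triangles with consecutive-in-angle structure. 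We would also use the known fact that a monochromatic triangle with many interior points contains a smaller one, which lets us restrict the checks to ``minimal'' triangles.

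For the lower bounds, the ASP solver finds a satisfying signotope of size $N-1$. It must then be realized geometrically, and for this we use the linear subreduction method. We fix the abscissae $x_1<\dots<x_{N-1}$, for instance chosen from a Horton-like or random template. With the abscissae fixed, each orientation becomes linear in the unknown ordinates. We then solve the combined system, consisting of the full logical model together with these linear inequalities, with an SMT solver. This searches over all admissible signotopes simultaneously and returns coordinates, which can be checked independently by exact arithmetic.

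The main obstacle is the upper bounds at the largest entries, such as $23$, $21$ and $20$ in the $k_1=0$ row. Two things make them hard. The constraints for $k=0$ versus large $k$ are highly asymmetric, and the instance size grows as $N^3$ triples times $N$ containment literals. Our first approach would be to cube-and-conquer on the colour pattern of the convex hull and on the colour classes of the extreme points. For lower-bound entries where realization fails for the chosen abscissae, we would resample the abscissae or perturb a known larger construction, such as Horton-set colourings, by deleting points.
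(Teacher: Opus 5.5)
Your proposal is correct and follows essentially the paper's own route. Upper bounds are UNSAT certificates for the signotope SAT/ASP encoding with one triangle constraint per colour. Lower bounds are realizations of size $N-1$ found by the linear subreduction method applied to the full logical model. Entries marked ``$N-$'' are lower bounds only, and the dash is the nonexistence result of \cite{chrom}. The differences are only in execution: the paper ran the hardest cells, e.g.\ $\tilde h(3,0;3,0;3,3)=23$ and $\tilde h(3,0;3,0;3,4)=21$, directly in clingo/kissat without any cube-and-conquer, and it used the grids $x_i=i$ or the exponential grid rather than Horton-like abscissae.
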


\begin{theorem}
The table below presents additional values for multicolored homogeneous configurations. A dash indicates the non-existence of the required value (follows from \cite{chrom, cravioto}).
\begin{table}[h!]
\centering
\begin{tabular}{|c|ccccccccc|}
\hline
$k$ & 0 & 1 & 2 & 3 & 4 & 5 & 6 & 7 & 8 \\
\hline
$h(3,k;3,k;3,k;3,k)$     & -- & 29-& 18     & 13  & 12 & 10 & 9 & 9 & 9 \\
$h_{nc}(4,k;4,k;4,k)$    & -- & -- & 21-22 & 16  & 14 & 12 & 10 & 10 & 10 \\
$h_{isl}(4,k;4,k;4,k)$   & -- & -- & 24-    & 19- & 16 & 12 & 10 & 10 & 10 \\
$h(3,k;3,k;3,k;3,k;3,k)$ & -- & -- & 27-    & 22- & 18 & 16 & 14 & 12 & 11 \\
\hline
\end{tabular}
\end{table}
\end{theorem}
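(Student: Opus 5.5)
The table is proved entry by entry. Each numerical entry $N$ has two parts: an explicit colored point set of $N-1$ points, and a proof that every colored set of $N$ points contains the required configuration. The dashes need no new work. In every row, the $k=0$ and $k=1$ dashes (and the $k=2$ dash for five colors) come from the Horton-set colorings of \cite{chrom}, or from the nonexistence bound $h(3,\lfloor\frac{c-3}{2}\rfloor;\ldots)$ of \cite{basu} and its $h_{nc}$ analogue from \cite{liu}. The entries marked ``$a$-'' are lower bounds only, so they also require just a construction. Before computing anything, I would exploit monotonicity to cut the work. Each function is nonincreasing in $k$. For $k$ large, each row stabilises at the trivial Ramsey-type value: once $k$ exceeds the number of points that can lie inside the required monochromatic triangle or quadrilateral, the function equals the pigeonhole bound ($2c+1$ for $c$ colors of triangles, and $3c+1$ in the $h_{nc}$ and $h_{isl}$ rows). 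This explains the saturated values 9, 10, 11 at the right end of the rows, with only a short argument that a convex position bound is not needed. Consequently, only a few small-$k$ entries per row require real computation.

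\textbf{Upper bounds.} I would encode each remaining statement as a SAT/ASP instance over abstract order types. The variables are the triple orientations (signotope axioms, with points sorted by $x$-coordinate) together with the color variables. Auxiliary variables encode ``point $p$ lies inside triangle $abc$''. From these I define counters for the number of interior points of each monochromatic triangle, or, for quadrilaterals, of each convex quadrilateral, nonconvex quadrilateral, or 4-island, according to whether the row concerns $h$, $h_{nc}$ or $h_{isl}$. Each counter is bounded by cardinality constraints such as \#count. The instance asserts that no monochromatic configuration with at most $k$ interior points exists. Unsatisfiability for $N$ points proves the upper bound, because every realizable point set induces a signotope. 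Symmetry breaking would follow the approach used for Theorem~\ref{h61}: fix the leftmost point as an extreme point, sort the others by angle, and fix the color of one point. I would also add lex-leader constraints on the color classes, since permuting the colors is a symmetry.

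\textbf{Lower bounds.} Running the same model with $N-1$ points produces satisfying signotopes. These must then be realized in the plane, and this is the step I expect to be the main obstacle, because most abstract witnesses are non-realizable or hard to realize. Here I would use the linear subreduction method described in the abstract. I would keep the full logical model, add geometric variables $y_i$ with the abscissae $x_i$ fixed (for example $x_i=i$ or a small random perturbation), and require each orientation literal to equal the sign of the determinant. With the $x_i$ fixed, these determinant constraints are linear in the $y_i$. The combined SMT instance therefore searches over all admissible colored signotopes at once and over coordinates, and is solved as a linear-arithmetic problem. If an instance fails, I would retry with a different choice of abscissae, or seed the search with Horton-type or previously known sets, such as the 22-point and 46-point examples. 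Each realized witness would finally be checked independently with exact integer arithmetic. For the largest four- and five-color entries, where neither direction finishes, I would report only the interval or the one-sided bound, exactly as recorded in the table.
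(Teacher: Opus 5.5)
Your method is the paper's. An UNSAT certificate for the colored-signotope model on $N$ points gives the upper bound, since $h_*\le\tilde h_*$. The linear subreduction method (the full logical model with fixed abscissae, so that the orientation constraints become linear in the $y_i$) produces an $(N-1)$-point witness. Where this does not close, only one-sided or interval bounds are recorded. Your saturation shortcut is also sound, up to an off-by-one ("exceeds" should be "is at least"). The value is $2c+1$ as soon as $k\ge 2c-2$ for triangles, and $3c+1$ as soon as $k\ge 3c-3$ in the $h_{nc}$ and $h_{isl}$ rows. This accounts exactly for the trailing entries 9, 10, 11; the paper simply computes them.

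However, you have misread which entries are dashes, and the non-existence claims you attach to them are false.
\begin{itemize}
\item In the row $h(3,k;3,k;3,k;3,k)$ the only dash is at $k=0$; the table gives $h(3,1;3,1;3,1;3,1)\ge 29$.
\item In the five-color row the dashes are at $k=0,1$ only; the table gives $h(3,2;3,2;3,2;3,2;3,2)\ge 27$.
\item The result of \cite{basu} yields non-existence only for $k\le\lfloor (c-3)/2\rfloor$, i.e.\ $k=0$ for $c=4$ and $k\le 1$ for $c=5$.
\item The upper bound of \cite{cravioto} for $h(3,c-3;\ldots;3,c-3)$, as quoted in the introduction, is finite for both $c=4$ and $c=5$. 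So both of these functions actually exist.
\end{itemize}
These two entries therefore do not come free. Each needs an explicit colored point set, of $28$ and $26$ points respectively, with no monochromatic triangle having at most $k$ interior points. In the paper the $28$-point set was found by local stochastic search rather than by linear subreduction.

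You should also justify the $h_{isl}$ dashes explicitly. A monochromatic 4-island with at most $k$ interior points always carries a simple (possibly non-convex) quadrilateral on the same vertices with at most $k$ interior points. Hence the non-existence of $h_{nc}(4,1;4,1;4,1)$ from \cite{liu} transfers to $h_{isl}$.

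Finally, the paper needed the exponential abscissa grid, not $x_i=i$ or a perturbation of it, for several witnesses: the $17$-point sets for four colors with $k=2$ and five colors with $k=4$, and the $21$-point set for five colors with $k=3$. The certificate $\tilde h_{nc}(4,2;4,2;4,2)=22$ also required decomposition by fixing the colors of the first $12$ points.
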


\section{Structure of the Paper}

The paper is organized as follows. Section~\ref{sec:defs} introduces basic definitions, formalizes the concept of a signotope, and describes the connection between the combinatorial properties of sets and their geometric realizability. Section~\ref{sec:model} is dedicated to describing the logical architecture of the research: it presents the CNF encoding of geometric predicates and algorithms for reducing Erdős--Szekeres type problems to the Boolean satisfiability problem (SAT) and Satisfiability Modulo Theories (SMT).

In Section~\ref{sec:linear}, the developed linear subreduction method is described in detail, which allowed for significant optimization of the search for geometric realizations in computationally intensive cases. Sections~\ref{sec:results_nc} and~\ref{sec:results_h} provide proofs of the main theorems formulated in the introduction, including the verification of new lower bounds. Finally, in Section~\ref{sec:concl}, open questions and perspectives for applying the proposed approach to related problems in combinatorial geometry are discussed.

\section{Combinatorial Model and SAT Formulation}
\label{sec:defs}
\subsection{Basic Definitions}

In this paper, we consider configurations of points in the plane in general position. To discretize the problem, we use the framework of order types and signotopes.

\begin{definition}[Order Type]
The order type of a finite set of points $\mathcal{X} = \{p_1, \dots, p_n\} \subset \mathbb{R}^2$ is a mapping $\chi \colon \binom{[n]}{3} \to \{-1, 1\}$ that assigns to each ordered triple of indices $(i, j, k)$ the orientation of the triangle $p_i p_j p_k$:
\[ \chi(i, j, k) = \operatorname{sgn} \det \begin{pmatrix} 1 & x_i & y_i \\ 1 & x_j & y_j \\ 1 & x_k & y_k \end{pmatrix}. \]
\end{definition}

\begin{definition}[Monotone Signotope]
A monotone signotope of rank 3 on $n$ elements is a mapping $\sigma \colon \binom{[n]}{3} \to \{-1, 1\}$ such that for any indices $1 \le i < j < k < l \le n$, the sequence of signs
\[ (\sigma(i, j, k), \sigma(i, j, l), \sigma(i, k, l), \sigma(j, k, l)) \]
has exactly one sign change.
\end{definition}

The connection between abstract combinatorial structures and Euclidean geometry is established through the concept of realization.

\begin{definition}[Geometric Realization]
A geometric realization of a signotope $\sigma$ is a set of points $\mathcal{X} = \{p_1, \dots, p_n\} \subset \mathbb{R}^2$ such that its order type $\chi$ coincides with $\sigma$, i.e., $\chi(i, j, k) = \sigma(i, j, k)$ for all $1 \le i < j < k \le n$. A signotope is called \textit{realizable} if there exists at least one geometric realization for it.
\end{definition}

\subsection{Motivation and the function $\tilde{h}(\cdots)$}

Calculating the classical function $h_*(n_1, k_1; \dots; n_c, k_c)$ in Euclidean space is difficult because the set of all point sets to be checked has the cardinality of the continuum. However, key geometric properties (convexity, a point lying inside a polygon) are invariant with respect to the order type. This allows mapping the problem onto the domain of abstract signotopes, the number of which for a fixed $n$ is large but finite.

The use of rank-3 monotone signotopes allows the geometric problem to be translated into the language of Boolean logic. This provides two key advantages:
\begin{enumerate}
\item \textbf{Discretization:} The infinite coordinate space is replaced by a finite space of Boolean variables $L_{abc} \in \{\text{true, false}\}$.
\item \textbf{Efficiency:} Modern SAT solvers use powerful heuristics to prune classes of configurations that certainly cannot contain the sought substructures.
\end{enumerate}

Hereafter in the text, the term {\it signotope} implies {\it monotone signotope of rank 3} unless explicitly stated otherwise.

By analogy with the classical case, let $\tilde{h}(n_1, k_1; \dots)$, $\tilde{h}_{nc}(n_1, k_1; \dots)$, and $\tilde{h}_{isl}(n_1, k_1; \dots)$ be the minimum integer $N$ such that any $c$-colored signotope on $N$ elements contains the corresponding monochromatic $n_i$-configuration (convex, non-convex, or island) with at most $k_i$ interior points.

Note that for the case $n_i=3$, the concepts of convex/arbitrary polygon and island are equivalent; therefore, we will use the general term {\it triangle} for them in the following text.

\begin{definition}[Maximal Signotope]
A signotope on $(N-1)$ elements is called \textit{maximal} if it does not contain any of the mentioned monochromatic $n_i$-polygons with the given constraint on the number of interior points, where $N = \tilde{h}_*(\cdots)$.
\end{definition}

The relationship between geometric and combinatorial functions is expressed by the fundamental inequality:
\[ h_*(n_1, k_1; \dots; n_c, k_c) \le \tilde{h}_*(n_1, k_1; \dots; n_c, k_c). \]

If it is possible to construct a configuration of $(N-1)$ points that is a geometric realization of at least one maximal signotope, then the equality $h_*(\cdots) = \tilde{h}_*(\cdots)$ is achieved. In the course of our research, such realizations were successfully found for almost all cases considered.

It is known that for $N \le 8$ points, all signotopes are realizable, which guarantees that the functions coincide. Starting from $N = 9$, non-realizable signotopes emerge. However, within the framework of the problem under consideration, as a rule, there exists not just one, but a whole family of different maximal signotopes. The value $h_*(\cdots)$ will be strictly less than $\tilde{h}_*(\cdots)$ only if all signotopes from this family simultaneously turn out to be non-realizable. Given that checking the realizability of an arbitrary signotope is an NP-hard problem, such examples have not yet been discovered in the literature. In the general case, $\tilde{h}_*(\cdots)$ provides an upper bound.

\vskip+0.2cm
\noindent\textbf{Open Question.} Does the equality $h_*(n_1, k_1; \dots; n_c, k_c) = \tilde{h}_*(n_1, k_1; \dots; n_c, k_c)$ always hold for arbitrary sets of parameters $n_i$ and $k_i$?

\section{Logical Architecture and CNF Encoding}
\label{sec:model}

Since in our model the points are initially ordered by the $x$-coordinate ($x_0 < x_1 < \dots < x_{N-1}$), the geometric conditions on the orientation of triples (variables $L_{abc}$) and point inclusion are substantially simplified. We associate a Boolean formula $\mathcal{F}_N$ in conjunctive normal form (CNF) with each configuration of $N$ points based on the following Boolean variables:

\begin{itemize}
\item $C_i(a) \in \{\text{true, false}\}$ --- point $a$ has color $i \in \{1, \dots, c\}$.
\item $L_{abc} \in \{\text{true, false}\}$ --- orientation of the triple $(a, b, c)$, where $\text{true} \iff \sigma(a, b, c) = +1$.
\item $EXT_{abc}(z) \in \{\text{true, false}\}$ --- point $z$ lies \textit{outside} the triangle $(a, b, c)$.
\item $TR_{abc}(q) \in \{\text{true, false}\}$ --- the triangle $(a, b, c)$ contains no more than $q$ interior points.
\end{itemize}

\subsection{Coloring and Inverse Color Encoding}
For each point $a \in \mathcal{X}$, a set of Boolean variables $\{C_1(a), C_2(a), \dots, C_c(a)\}$ is introduced. Our implementation uses \textit{inverse logic} for color assignment: the value $C_i(a) = \text{false}$ means that point $a$ is colored with color $i$, and $C_i(a) = \text{true}$ means that the point is of a different color.

This approach significantly simplifies the CNF representation of conditions on monochromatic figures, minimizing the number of negation operations. The conditions for correct coloring take the form:

1. \textbf{Color Existence Condition:} a point must be colored in at least one color (i.e., at least one variable must be \text{false}):
\begin{equation}
\overline{C_1(a)} \lor \overline{C_2(a)} \lor \dots \lor \overline{C_c(a)}
\end{equation}

2. \textbf{Uniqueness Condition (for $c > 1$):} a point cannot have more than one color. This means that for any two distinct colors $i$ and $j$, both variables cannot be simultaneously \text{false}:
\begin{equation}
\forall_{i < j}: C_i(a) \lor C_j(a)
\end{equation}

\subsection{Geometric Axioms of the Signotope}
To ensure that the variables $L_{abc}$ correspond to a monotone signotope, for each quadruple of indices $a < b < c < d$, constraints are imposed that guarantee exactly one sign change in the sequence $(L_{abc}, L_{abd}, L_{acd}, L_{bcd})$. In terms of forbidden configurations (negation of a conjunction), one of such conditions looks like this:
\begin{equation}
\overline{ L_{abc} \land \overline{L_{acd}} \land L_{bcd} } \land \overline{ \overline{L_{abc}} \land L_{acd} \land \overline{L_{bcd}} }
\end{equation}
After expansion using De Morgan's laws, we obtain CNF clauses:
\begin{equation}
(\overline{L_{abc}} \lor L_{acd} \lor \overline{L_{bcd}}) \land (L_{abc} \lor \overline{L_{acd}} \lor L_{bcd})
\end{equation}
The complete system of four such conditions (8 clauses) fully defines the required structure.

\subsection{Exterior Point and Density Variables}
Point $z$ can lie inside $\triangle abc$ only under the condition $a < z < c$. We define the conditions under which a point is external ($EXT$). Without loss of generality, consider the case $a < b < z < c$. Point $z$ is external if the orientations of triangles $azc$ and $bzc$ coincide:
\begin{equation}
(L_{azc} \iff L_{bzc}) \implies EXT_{abc}(z)
\end{equation}
Expanding the equivalence:
\begin{equation}
((L_{azc} \land L_{bzc}) \implies EXT_{abc}(z)) \land ((\overline{L_{azc}} \land \overline{L_{bzc}}) \implies EXT_{abc}(z))
\end{equation}
In CNF format:
\begin{equation}
(\overline{L_{azc}} \lor \overline{L_{bzc}} \lor EXT_{abc}(z)) \land (L_{azc} \lor L_{bzc} \lor EXT_{abc}(z))
\end{equation}

Density variables $TR_{abc}(q)$ encode the statement: ''there are no more than $q$ points inside triangle $abc$``. Let $P_{abc} = \{z : a < z < c, z \ne b\}$ be the set of all potential points that, by virtue of their $x$-coordinates, could be inside triangle $abc$. The density condition is formulated as follows: if in the set $P_{abc}$ there exists a subset $Z$ of size $|P_{abc}| - q$ consisting exclusively of exterior points, then the total number of interior points does not exceed $q$.
\begin{equation}
\left( \bigvee_{\substack{Z \subset P_{abc} \\ |Z| = |P_{abc}| - q}} \left( \bigwedge_{z \in Z} EXT_{abc}(z) \right) \right) \implies TR_{abc}(q)
\end{equation}
Expanding the implication yields a system of clauses for each $TR_{abc}(q)$:
\begin{equation}
\bigwedge_{ \substack{Z \subset P_{abc} \\ |Z| = |P_{abc}| - q}} \left( \bigvee_{z \in Z} \overline{EXT_{abc}(z)} \lor TR_{abc}(q) \right)
\end{equation}

\subsection{Conditions on Monochromatic Figures in Inverse Color Logic}
For each color $i$ and corresponding limit $k_i$, we introduce a prohibition on the existence of a convex $n_i$-gon containing no more than $k_i$ points inside.

By using inverse color encoding, the condition that all vertices of a potential polygon $\{a, b, c\}$ or $\{a, b, c, d\}$ have the same color $i$ is written as a disjunction of variables without negation signs.

For the case $n_i=3$ and a triple of points $a < b < c$ of the same color $i$, the {\it forbidden} configuration (empty or almost empty triangle) takes the form:
\begin{equation}
\overline{\overline{C_i(a)} \land \overline{C_i(b)} \land \overline{C_i(c)} \land TR_{abc}(k_i) }
\end{equation}
In CNF representation:
\begin{equation}
C_i(a) \lor C_i(b) \lor C_i(c) \lor \overline{TR_{abc}(k_i)}
\end{equation}

For $n_i=4$ and a quadruple of points $a < b < c < d$ of the same color $i$, configurations forming a convex quadrilateral with a total number of interior points not exceeding $k_i$ are forbidden. Within the monotone signotope model, this condition breaks down into two geometric scenarios:

1. \textbf{Case 4-cup / 4-cap:} $L_{abc} \iff L_{bcd}$. For all admissible combinations $q_1 + q_2 = k_i$, the forbidden configuration is described as:
\begin{equation}
\overline{\overline{C_i(a)} \land \overline{C_i(b)} \land \overline{C_i(c)} \land \overline{C_i(d)} \land (L_{abc} \iff L_{bcd}) \land TR_{abc}(q_1) \land TR_{acd}(q_2) }
\end{equation}
In CNF format, this is represented by a pair of clauses (for positive and negative orientation, respectively):
\begin{equation}
C_i(a) \lor C_i(b) \lor C_i(c) \lor C_i(d) \lor \overline{L_{abc}} \lor \overline{L_{bcd}} \lor \overline{TR_{abc}(q_1)} \lor \overline{TR_{acd}(q_2)}
\end{equation}
\begin{equation}
C_i(a) \lor C_i(b) \lor C_i(c) \lor C_i(d) \lor {L_{abc}} \lor {L_{bcd}} \lor \overline{TR_{abc}(q_1)} \lor \overline{TR_{acd}(q_2)}
\end{equation}

2. \textbf{Case 3-cup + 3-cap:} $L_{abd} \iff \overline{L_{acd}}$. In CNF format, these conditions are written as follows:
\begin{equation}
C_i(a) \lor C_i(b) \lor C_i(c) \lor C_i(d) \lor {L_{abd}} \lor \overline{L_{acd}} \lor \overline{TR_{abc}(q_1)} \lor \overline{TR_{bcd}(q_2)}
\end{equation}
\begin{equation}
C_i(a) \lor C_i(b) \lor C_i(c) \lor C_i(d) \lor \overline{L_{abd}} \lor {L_{acd}} \lor \overline{TR_{abc}(q_1)} \lor \overline{TR_{bcd}(q_2)}
\end{equation}

\subsection{Generalization to the Non-convex Case ($h_{nc}$)}
To calculate the function $h_{nc}$, where the convexity condition is not mandatory, the algorithm generates an expanded set of clauses. In this case, a prohibition is imposed on \textit{any} subset of 4 points of color $i$ if the total number of points inside the union of triangles forming the triangulation of the given quadruple does not exceed $k_i$. This is implemented through an exhaustive search of orientation combinations $L$ and imposing corresponding constraints on the density variables $TR$.

\subsection{Generalization to 4-islands ($h_{isl}$)}
When searching for islands, the criterion for prohibition is also the total number of points inside the convex hull of a quadruple of points of color $i$. Depending on the relative positions, this condition is formulated as follows:
\begin{enumerate}
\item If the quadruple is in a convex position, the sum of interior points in the two triangles making up its triangulation is taken into account.
\item If one point is inside the triangle formed by the other three, the number of points inside this (outer) triangle is considered, and the limit of interior points is increased by one (accounting for the innermost point of the quadruple itself).
\end{enumerate}

\subsection{Symmetry Breaking}
To significantly optimize the search space, we fix the orientation of \textit{all} triples containing the point with the smallest index:
\begin{equation}
\forall_{0 < b < c < n}: \quad L_{0bc} = \text{true}
\end{equation}
According to Scheucher's results \cite{scheucher2020}, any finite set of points in the plane in general position can be renumbered such that the condition $L_{0bc} = \text{true}$ is satisfied for all $b < c$, and then affinely transformed into an order-type equivalent set whose points are ordered by the $x$-coordinate. Thus, this constraint does not narrow the search space and is applicable to all abstract monotone signotopes.

\section{Geometric Realization and Search for Extremal Configurations}
\label{sec:linear}

To confirm the equality $h_*(\dots) = \tilde{h}_*(\dots)$, it is necessary to demonstrate the existence of a point set $\mathcal{X}$ realizing at least one maximal signotope. In this work, we applied two complementary methods for finding coordinates.

\subsection{Local Stochastic Search Method}
This iterative approach is based on the direct variation of point coordinates in $\mathbb{R}^2$. The process begins with the generation of a random distribution of $N$ points in the plane. At each step, the algorithm calculates the number of {\it forbidden} structures (for example, monochromatic convex $n_i$-polygons with no more than $k_i$ interior points) to be excluded.

The algorithm selects a random point and moves it to a new, randomly chosen position. If the modified configuration is characterized by a smaller or equal number of forbidden figures compared to the previous state, the movement is fixed; otherwise, the point returns to its original position. The process is repeated until the undesirable structures are fully eliminated.

The \textit{advantage} of the method is the possibility of searching for high-symmetry realizations (by coordinated movement of groups of points while preserving the symmetry group). The \textit{disadvantage} is the low convergence rate in the vicinity of local minima. Note that the efficiency of the approach can be significantly improved using the simulated annealing algorithm, where the probability of accepting a {\it worse} step decreases exponentially with a drop in the temperature parameter, allowing the system to escape local optima.

\subsection{Linear Subreduction Method}
Synthesis of Boolean signotope constraints with linear arithmetic theory within the SMT framework proved to be the most effective method. The Z3 SMT solver from Microsoft \cite{z3} was used as the primary tool.

The general task of finding coordinates for a given signotope reduces to solving a system of non-linear inequalities. In our model, the connection between logic variables $L_{abc}$ and point coordinates $p_i = (x_i, y_i)$ is specified in the form of disjunctive conditions:
\begin{equation}
L_{abc} ~~ \lor ~~ (x_b - x_a)y_c + (x_a - x_c)y_b + (x_c - x_b)y_a \ge 1
\end{equation}
\begin{equation}
\overline{L_{abc}} ~~ \lor ~~ (x_b - x_a)y_c + (x_a - x_c)y_b + (x_c - x_b)y_a \le -1
\end{equation}

Since these conditions are quadratic relative to variables $x$ and $y$, SMT solvers effectively find solutions only for small values of $N$. We optimized the task by fixing the abscissae of the points (using a uniform distribution $x_i = i$ or an exponential spacing $x_i \in \{\dots, -C^2, -C, -1, [0], 1, C, C^2, \dots\}$, where the entry $[0]$ denotes the presence of a central point only for configurations with an odd $N$). This turns quadratic inequalities into \textbf{linear} ones relative to the ordinates $y_i$. With such a linearized problem, the SMT solver performs orders of magnitude faster.

The fundamental difference between our approach and classical verification methods (e.g., \cite{scheucher2020}) is that we do not look for a realization of a specific, pre-determined signotope. The \textbf{full logic formula} of the problem is fed to the SMT solver. Thus, the solver looks for coordinates for \textit{any} of the entire set of admissible signotopes, satisfying both combinatorial and geometric constraints simultaneously. This significantly expands the search area and allows for finding realizations even in cases where the space of admissible signotopes is extremely limited.

Note that for searching the ordinates $y_i$ in the SMT model, we utilized the integer data type (\texttt{Int}) instead of the real type (\texttt{Real}). From the perspective of the Z3 solver's architecture, this allows the use of specialized linear integer arithmetic (LIA) algorithms, which in some cases demonstrate better convergence on dense systems of inequalities.

Although point coordinates in the plane are a priori real numbers, transitioning to integers in our problem does not limit the generality of the search. Since the orientation function $L_{abc}$ is invariant under positive scaling, any real solution to the system of linear inequalities can be approximated by a rational one due to the openness of the feasible region, and then brought to integer form by multiplying by a common denominator. Thus, the use of the \texttt{Int} type not only makes the sought coordinates more representative but also optimizes the computational load on the SMT solver.

It is worth noting that fixing the abscissae $x_i$ imposes additional constraints on the geometry of the set $\mathcal{X}$, which theoretically could lead to the loss of some realizations. Strictly speaking, an equivalent point set with a given set of $x$-coordinates does not exist for every realizable signotope. Nevertheless, this approach is dictated by the need to achieve an acceptable computation speed.

The motivations for choosing fixed $x_i$ values are as follows:
\begin{enumerate}
\item \textbf{Exponential Spacing:} Using the grid $x_i \in \{\dots, -C^2, -C, -1, 0, 1, C, C^2, \dots\}$ allows for the imitation of {\it stretched} configurations, which are often found in extremal examples of order type theory.
\item \textbf{Density of Realizations:} Empirical evidence suggests the space of geometric realizations for most signotopes is sufficiently vast, allowing the sought configuration to be found even in a limited subspace with fixed $x_i$.
\item \textbf{Empirical Completeness:} In almost all cases we considered where the SAT solver confirmed the existence of an abstract signotope, the SMT solver successfully found its integer realization even with the simplest distribution $x_i = i$.
\end{enumerate}

Thus, the linear subreduction method is a powerful tool for fast verification, allowing the automated confirmation of the equality $h_*(\cdots) = \tilde{h}_*(\cdots)$ for the majority of configurations.

\section{The ES\_color.py Software Package}

To conduct numerical experiments and automatically generate logical formulas, a specialized software package was developed in the Python language --- \texttt{ES\_color.py}. The program supports generating output data in DIMACS (for SAT solvers) and SMT-LIB v2 (for SMT solvers) formats.

\subsection{Configuration Parameters}
The script is executed from the command line using the following key parameters:

\begin{itemize}
\item \texttt{n=N} --- total number of points $N$ in the sought configuration.
\item \texttt{tr\_i=k\_i} --- constraint on the maximum number of interior points for triangles of color $i$. For example, the parameters \texttt{tr1=0 tr2=0} initiate the prohibition of empty triangles of the first and second colors, respectively.
\item \texttt{cv\_i=k\_i} --- exclusion of configurations containing convex quadrilaterals of color $i$ considering the limit $k_i$ (problem $h$).
\item \texttt{nc\_i=k\_i} --- exclusion of any (not necessarily convex) quadrilaterals of color $i$ containing no more than $k_i$ points inside (problem $h_{nc}$).
\item \texttt{is\_i=k\_i} --- exclusion of monochromatic 4-islands (problem $h_{isl}$).
\item \texttt{sb=off} --- deactivation of symmetry breaking algorithms.
\item \texttt{xgrid=C} --- switching to the mode of generating an SMT2 formula to search for a geometric realization. If $C=1$, a linear grid of abscissae $x_i=i$ is used. If $C>1$, the base of an exponential grid is set: $x_i \in \{\dots,-C^2,-C, -1, [0], 1 ,C, C^2,\dots\}$. This mode automatically deactivates the \texttt{sb} option.
\end{itemize}

\subsection{Usage Examples and Integration with Solvers}

To search for an abstract signotope, the result is directed to a SAT solver. Below is an example of verifying the equality $\tilde{h}_{nc}(4,0;3,0)=14$:
\begin{lstlisting}[language=bash]
./ES_color.py nc1=0 tr2=0 n=13 | kissat
SATISFIABLE
./ES_color.py nc1=0 tr2=0 n=14 | kissat
UNSATISFIABLE
\end{lstlisting}

To confirm geometric realizability and calculate the ordinates $y_i$, the \texttt{xgrid} mode is used. The generated SMT2 formula is passed to the Z3 solver via the standard stream (pipe):
\begin{lstlisting}[language=bash]
./ES_color.py nc1=0 tr2=0 n=13 xgrid=1 | z3 -in | sed ':a;N;s/)\n (/) (/g;ba'
sat
((x0 0) (y0 (- 278)) (k1 true) (k14 false))
((x1 1) (y1 (- 172)) (k2 false) (k15 true))
...
((x11 11) (y11 186) (k12 false) (k25 true))
((x12 12) (y12 208) (k13 true) (k26 false))
\end{lstlisting}

The Z3 output in this case contains not only the verdict \texttt{sat/unsat} but also, in the case of \texttt{sat}, the specific values of the coordinates $y_i$ and the color distribution $C_i(a)$ for all points, allowing for instant verification and visualization of the found example.

\section{Answer Set Programming}
For independent verification of the results obtained using the imperative SAT generator in Python, an alternative logical model was developed in a declarative programming language. Using the \texttt{clingo} system \cite{clingo} allows the description of geometric constraints in terms of predicate logic, which ensures a high level of abstraction.

The fundamental advantage of this approach lies in the exceptional conciseness of the code: the entire logic of the problem --- including signotope axioms, point inclusion predicates, and conditions on forbidden configurations --- is implemented in a few dozen lines. Such transparency significantly simplifies the audit of the algorithm and minimizes the likelihood of introducing implementation errors that may occur during the manual formation of complex CNF structures.

Below is the full text of the ASP model used for cross-checking the calculations:

\begin{lstlisting}[ language=clingo]
pt(0..n-1).

%  --- SELECT TARGET POLYGONS ---
%  Format: ins(Type, ColorID, MaxInteriorPoints)
ins(pr,p1,pr1;
    tr,t1,tr1; tr,t2,tr2; tr,t3,tr3; tr,t4,tr4; tr,t5,tr5;
    cv,c1,cv1; cv,c2,cv2; cv,c3,cv3;
    cv,i1,is1; cv,i2,is2; cv,i3,is3;
    is,i1,is1+1; is,i2,is2+1; is,i3,is3+1;
    (nc;cv),n1,nc1; (nc;cv),n2,nc2; (nc;cv),n3,nc3).
#const is1=-2. #const is2=-2. #const is3=-2.

%  --- GENERATORS ---
%  Assign exactly one color to each point
1{c(A,Z): ins(_,Z,I),I=0..99}1 :- pt(A).
#heuristic c(A,Z): pt(A), ins(_,Z,I), I=0..99. [10, level]
%  Assign exactly one rotation to each triplet (Chirotope base)
1{l(A,B,C,R): R=(-1;1)}1 :- pt(A),pt(B),pt(C), A<B,B<C.

%  --- GEOMETRIC CONSTRAINTS (Axioms for Signotope) ---
%  These ensure that the relative positions of points are physically possible
:- l(A,B,C,R), l(A,C,D,-R), l(B,C,D,R).
:- l(A,B,C,R), l(A,B,D,-R), l(A,C,D,R).
:- l(A,B,C,R), l(A,B,D,-R), l(B,C,D,R).
:- l(A,B,D,R), l(A,C,D,-R), l(B,C,D,R).
%  Symmetry breaking: fix orientation of the first triplets to avoid rotated solutions
:- l(0,B,C,-1), sb!=off.

%  --- INTERIOR POINTS LOGIC ---
%  Define if point X is inside triangle (A,B,C) based on relative orientations
i(A,B,C,X) :- l(A,X,B,R), l(A,X,C,-R), B<C.
i(A,B,C,X) :- l(B,X,C,R), l(A,X,C,-R), A<B.

%  Calculate if triangle (A,B,C) contains no more than J interior points
tr(A,B,C,J) :- pt(A),pt(B),pt(C), A<B,B<C, {i(A,B,C,X)}<=J, J=0..I, ins(_,_,I), I=0..99.

%  --- SHAPE INTEGRITY CONSTRAINTS ---

%  Pairs
:- ins(pr,Z,_),  c(A,Z),c(B,Z), A<B.

%  Triangles
:- ins(tr,Z,I),  c(A,Z),c(B,Z),c(C,Z),  A<B,B<C,  tr(A,B,C,I).

%  Convex quadrilaterals
:- ins(cv,Z,I),  l(A,B,C,R), l(B,C,D,R),   c(A,Z),c(B,Z),c(C,Z),c(D,Z),  A<B,B<C,C<D,  tr(A,B,C,I1), tr(A,C,D,I2), I1+I2=I.
:- ins(cv,Z,I),  l(A,B,D,R), l(A,C,D,-R),  c(A,Z),c(B,Z),c(C,Z),c(D,Z),  A<B,B<C,C<D,  tr(A,B,C,I1), tr(B,C,D,I2), I1+I2=I.

%  Non-convex quadrilaterals
:- ins(nc,Z,I),  l(A,B,D,R), l(A,B,C,-R),  c(A,Z),c(B,Z),c(C,Z),c(D,Z),  A<B,B<C,C<D,  tr(A,B,C,I1), tr(A,B,D,I2), I1+I2=I.
:- ins(nc,Z,I),  l(A,B,D,R), l(A,B,C,-R),  c(A,Z),c(B,Z),c(C,Z),c(D,Z),  A<B,B<C,C<D,  tr(A,B,C,I1), tr(B,C,D,I2), I1+I2=I.
:- ins(nc,Z,I),  l(A,B,D,R), l(A,B,C,-R),  c(A,Z),c(B,Z),c(C,Z),c(D,Z),  A<B,B<C,C<D,  tr(A,B,D,I1), tr(B,C,D,I2), I1+I2=I.
:- ins(nc,Z,I),  l(A,C,D,R), l(B,C,D,-R),  c(A,Z),c(B,Z),c(C,Z),c(D,Z),  A<B,B<C,C<D,  tr(A,B,C,I1), tr(A,C,D,I2), I1+I2=I.
:- ins(nc,Z,I),  l(A,C,D,R), l(B,C,D,-R),  c(A,Z),c(B,Z),c(C,Z),c(D,Z),  A<B,B<C,C<D,  tr(A,B,C,I1), tr(B,C,D,I2), I1+I2=I.
:- ins(nc,Z,I),  l(A,C,D,R), l(B,C,D,-R),  c(A,Z),c(B,Z),c(C,Z),c(D,Z),  A<B,B<C,C<D,  tr(A,C,D,I1), tr(B,C,D,I2), I1+I2=I.

%  4-islands
:- ins(is,Z,I), l(A,B,D,R), l(A,B,C,-R),   c(A,Z),c(B,Z),c(C,Z),c(D,Z),  A<B,B<C,C<D,  tr(A,C,D,I).
:- ins(is,Z,I), l(A,C,D,R), l(B,C,D,-R),   c(A,Z),c(B,Z),c(C,Z),c(D,Z),  A<B,B<C,C<D,  tr(A,B,D,I).

#show c/2.
#show l/4.
\end{lstlisting}

\subsection{Dynamic Parameterization and Constraint Activation Mechanism}
A feature of the presented ASP model is the use of the \texttt{ins(Type, ColorID, MaxPoints)} predicate as a system registry of active constraints. This allows for the dynamic determination of the number of colors involved and the types of geometric prohibitions without modifying the program's source code.

\begin{enumerate}
\item \textbf{Parameter Activation Mechanism:}
The key logic of the model is embedded in the rule for generating the coloring:
\begin{lstlisting}[language=clingo, numbers=none]
1{c(A,Z) : ins(_,Z,I), I=0..99}1 :- pt(A).
\end{lstlisting}
This rule uses the matching method with the interval \texttt{I=0..99}. When a parameter is set via the command line (for example, \texttt{-c nc1=0}), the constant \texttt{nc1} receives a specific integer value. The condition \texttt{ins(nc, n1, I), I=0..99} becomes true, as the value of \texttt{I} falls within the specified range, which {\it activates} the color identifier \texttt{n1} for the generator \texttt{c(A,Z)}.

If a parameter for a specific color (for example, \texttt{tr5}) is not passed, the corresponding constant remains undefined, and the \texttt{ins} predicate for that color is not evaluated. As a result, the generator \texttt{c(A,Z)} excludes this color from the search space when coloring points. Thus, the number of active colors in the model strictly corresponds to the number of given external constraints.
\end{enumerate}

\begin{enumerate}
\setcounter{enumi}{1}
\item \textbf{Constraint Selectivity:}
Each exclusion rule (lines 29--50 of the listing) is associated with the corresponding type from the \texttt{ins} predicate. This ensures that the logic, for example, of non-convex quadrilaterals (\texttt{nc}), is applied exclusively to those points whose color \texttt{Z} was activated through the corresponding parameter. This approach allows combining diverse geometric requirements for different color classes within a single program.

\item \textbf{Exhaustive Search Optimization:}
For Erdős--Szekeres type problems, the proof of the absence of solutions (\texttt{UNSAT} verdict) is critical, as it serves as strict confirmation of the exact value of the function $\tilde{h}_*$. For complex configurations, the verification procedure may require significant computational resources.

The \texttt{\#heuristic} directive instructs the solver to prioritize decisions regarding the coloring variables \texttt{c(A,Z)}. Setting a high priority (\texttt{10, level}) forces the system to distribute points by color first, which significantly accelerates the Unit Propagation procedure and allows for more efficient pruning of search tree branches that do not contain valid configurations. This is a decisive factor for obtaining the \texttt{UNSAT} result in an acceptable time.

Additionally, to speed up the verification process, specific settings for the \texttt{clingo} solver are used:
\begin{itemize}
\item \texttt{--configuration=frumpy} --- activation of a mode oriented towards intensive depth-first search with frequent conflicts. This preset demonstrated the highest efficiency when analyzing problems with a small number of colors (1--2), where the constraint density is high and the search tree requires rigid and fast pruning of non-promising branches.
\item \texttt{--configuration=crafty} --- a preset that proved to be most effective for problems with three or more colors. Its advantage in multicolored configurations is due to a more flexible restart mechanism and conflict-learning heuristics, which allow for faster structure finding in a more sparse and multidimensional search space.
\item \texttt{--sat-p=3} --- use of a SAT-level preprocessor to simplify the logical formula directly during the search process (on the fly).
\item \texttt{--heuristic=Domain} --- prioritization of variable selection based on their domain structure (in combination with the author's \texttt{\#heuristic} directive), which is critically important for the prompt pruning of unsatisfiable search tree branches.
\end{itemize}
\end{enumerate}

Example of a run to verify the value $\tilde{h}_{nc}(4,0; 3,0) = 14$ for bicolored sets:
\begin{lstlisting}[language=bash]
clingo ES_color.lp --configuration=frumpy --sat-p=3 --heuristic=Domain -c nc1=0 -c tr2=0 -c n=13
SATISFIABLE
clingo ES_color.lp --configuration=frumpy --sat-p=3 --heuristic=Domain -c nc1=0 -c tr2=0 -c n=14
UNSATISFIABLE
\end{lstlisting}
In this scenario, the solver activates two colors (\texttt{n1} and \texttt{t2}), applies the corresponding types of geometric constraints to them, and performs an exhaustive search to formally prove the absence of solutions.

\subsection{Finding Coordinates Using the clingo-lpx Solver}
As an alternative method for finding geometric coordinates, the \texttt{clingo-lpx} solver was investigated. This extension integrates the simplex method directly into the Answer Set Programming (ASP) search process, allowing the formulation of linear constraints over rational numbers within the logical program.

To implement the linear subreduction method, an additional module was developed, formalizing the connection between signotope orientations and point ordinates $y_i$ with fixed abscissae $x_i$:

\begin{lstlisting}[language=clingo, caption={lpx --- extension for linking logic variables with coordinates}]
#const sb=off.
#const xgrid=1.

x(N,N) :- pt(N), xgrid=1.

x(0,n/2) :- xgrid>1, n\2=1.
x(-(xgrid**(n/2-N-1)),N) :- pt(N), N<n/2, xgrid>1.
x( xgrid**(N-(n-1)/2-1), N) :- pt(N), N>(n-1)/2, xgrid>1.

&sum {KA*y(XA); KB*y(XB); KC*y(XC)} >= 1 :- l(A,B,C,1),  x(XA,A),x(XB,B),x(XC,C), KA=XC-XB, KB=XA-XC, KC=XB-XA, xgrid>0.
&sum {KA*y(XA); KB*y(XB); KC*y(XC)} <= -1:- l(A,B,C,-1), x(XA,A),x(XB,B),x(XC,C), KA=XC-XB, KB=XA-XC, KC=XB-XA, xgrid>0.
\end{lstlisting}

\subsubsection{Features and Performance}
The \texttt{clingo-lpx} solver allows for finding exact rational values of the ordinates $y_i$ (in the form of common fractions) that satisfy the given signotope. Below is an example of the work for the problem $h_{nc}(4,0; 3,0) = 14$ at $N=13$:
\begin{lstlisting}
clingo-lpx lpx ES_color.lp -c nc1=0 -c tr2=0 -c n=13 -c xgrid=1
...
y(0)=4719/224 y(1)=11 y(2)=7801/448 y(3)=-323/112 y(4)=-10833/448 y(5)=-10411/224
y(6)=4143/224 y(7)=-3489/112 y(8)=-1231/32 y(9)=4591/896 y(10)=1 y(11)=0 y(12)=0
SATISFIABLE
\end{lstlisting}

Despite the deep integration of arithmetic and logic, empirical tests have shown that the \texttt{clingo-lpx} solver almost always lags behind the Z3 SMT solver in terms of performance. This is likely due to the fact that Z3 implements more aggressive conflict-driven clause learning (CDCL) strategies for mixed (logic-arithmetic) problems.

Nevertheless, the use of \texttt{clingo-lpx} remains promising for configurations with small $N$, as it allows the full model to be described within a single declarative language without the need for external CNF generators and intermediate data formats. This makes the \texttt{clingo-lpx}-based approach an effective and convenient extension of the main ASP code for the rapid verification of new geometric hypotheses.

\section{Computational Experiment Results}
\label{sec:results_nc}

\subsection{Methodology and Tools}
To verify combinatorial configurations in this study, SAT solvers representing various stages of the development of logical inference search algorithms were used. \textit{MiniSat} \cite{minisat} and \textit{Glucose} \cite{glucose} were chosen as baselines. These classical representatives of the CDCL architecture have been the industry standard for the past decades due to their stability and predictability when working with high-density combinatorial problems.

As a modern high-performance solution (State-of-the-Art), the \textit{Kissat} solver \cite{kissat} (version 4.0.4) was used, which demonstrated the best results in recent international SAT Competitions. Unlike universal SMT systems such as \textit{Z3}, which are optimized to support a wide range of logical theories and carry additional computational overhead for their processing, \textit{Kissat} is extremely specialized exclusively in Boolean satisfiability problems. The high efficiency of this tool is achieved through aggressive inprocessing formula simplification methods and the dynamic adaptation of heuristics to the structure of a specific instance.

\subsection{Computationally Simple Cases}
This category includes all scenarios for which the value of the function $\tilde{h}_*(\dots)$ does not exceed 20, with the exception of three specific cases:
\[ \tilde{h}(3,2; 3,2; 3,2; 3,2) = 17, \quad \tilde{h}(3,4; 3,4; 3,4; 3,4; 3,4) = 17, \quad \tilde{h}_{isl}(4,3; 4,3; 4,3) = 19, \]
a detailed analysis of which is presented in the next section.

The running time of SAT/ASP solvers for all configurations with up to $N = 14$ points were below 12 seconds. As the set size increased to $N = 17$, the verification time generally did not exceed 1000 seconds. The only exception was the case $\tilde{h}_{nc}(3,3;3,3;3,3)=16$, which required approximately 52,000 CPU seconds. For problems with $N = 20$ points, the search time was less than 70,000 seconds.

A comparative analysis of the performance of various tools is available in the project repository\footnote{\url{https://github.com/koshelevv/Erdos-Szekeres/tree/main/colored_points}}; \textit{Kissat} proved to be the most effective solver for these problems in practice.

For all specified cases, the linear subreduction method allowed the construction of extremal point configurations on a linear abscissa grid ($x_i = i$). In most cases, the coordinate search time ranged from a few seconds to 750 seconds. 

For three computationally intensive tasks:
\begin{equation}
h_{isl}(4,0;4,2)=16, \quad h(4,1;4,2)=16, \quad h(4,0;4,4)=17
\end{equation}
the time required to generate the realization was 4, 2.6, and 3 hours, respectively.

\subsection{Computationally Complex Cases}
This section presents the results for the most resource-intensive configurations. Summary data on solver runtimes and parameters of the found realizations are provided in the table below.

\begin{table}[ht!]
\centering
\small
\begin{tabular}{|l|c|cccc||ccc|}
\multicolumn{9}{c}{\footnotesize Computational results for complex cases; verification time is given in CPUh.} \\
\hline
& & & \multicolumn{2}{c}{\textbf{clingo --heuristic}} & & \multicolumn{3}{c|}{\textbf{\scriptsize Linear Subreduction Method}} \\

\textbf{Parameters} & $\mathbf{\tilde{h}(\cdots)}$ & \textbf{kissat} & \textbf{Default} & \textbf{Domain} & \textbf{Decomp.} & $\mathbf{\tilde{h}-1}$ & $\mathbf{|\mathcal{X}|}$ & $\mathbf{t_{\mathcal{X}}}$ \\ \hline \hline
$h(3,2; 3,2; 3,2; 3,2)$ & 18 & $\circ$ & 447 & 294 & $37^G$ & 17 &$17^{(4)}$ & 8.8 \\
$h(3,4; 3,4; 3,4; 3,4; 3,4)$ & 18 & 29      & $\circ$ & 324 & $\circ$ & 17 &$17^{(4)}$ & 4.5 \\
$h(3,0; 3,0; 3,4)$           & 21 & 43      & 67 & 73 & $\circ$ & 20 & 20 & 4.2 \\
$h(3,0; 3,0; 3,3)$           & 23 & $\circ$ & 884 & 1067 & $\circ$ & 22 & 22 & 5.8 \\
$h(4,0; 4,3)$                & 21 & 5.6     & 8.7 & 8.7 & $\circ$ & 20 & 19 & 17 \\
$h_{isl}(4,0; 4,1)$          & 22 & 57      & 55 & 100 & $\circ$ & 21 & 21 & 33 \\
$h(4,0; 4,2)$                & 25 & $\circ$ & 1619 & $\circ$ & $563^G$ & 24 &$22^{(4)}$ & 1783 \\
$h(4,0;3,0)$                 & 26 & $\circ$ & 1193 & $\circ$ & $353^G$ & 25 & 25 & LSS \\
$h_{nc}(4,0; 4,0)$           & 26 & $\circ$ & 1583 & $\circ$ & $350^G$ & 25 & 25 & 97 \\
$h_{nc}(4,2; 4,2; 4,2)$      & 22 & $\circ$ & $\circ$ & $\circ$ & $880^M$ & 21 & 20 & 543 \\ \hline
$h_{isl}(4,3; 4,3; 4,3)$     & \multicolumn{5}{c||}{} & $18^*$ & 18 & 5 \\
$h_{isl}(4,2; 4,2; 4,2)$     & \multicolumn{5}{c||}{} & $27^*$ & $24^{(4)}$ & 61 \\
$h(3,0; 3,0; 3,2)$           & \multicolumn{5}{c||}{Experiments in progress} & $25^*$ & 25 & 1 \\
$h(3,0; 3,0; 3,1)$           & \multicolumn{5}{c||}{(decomposition and significant resources required)} & $34^*$ & 32 & 855 \\
$h(4,0; 4,1)$                & \multicolumn{5}{c||}{} & $33^*$ & 28 & 756 \\
$h(3,1; 3,1; 3,1; 3,1)$      & \multicolumn{5}{c||}{} & $35^*$ & 28 & LSS \\
$h(3,3; 3,3; 3,3; 3,3; 3,3)$ & \multicolumn{5}{c||}{} & $22^*$ &$21^{(4)}$ & 8302 \\
$h(3,2; 3,2; 3,2; 3,2; 3,2)$ & \multicolumn{5}{c||}{} & $32^*$ & 26 & 821 \\ \hline
$h_{isl}(4,0; 4,0)$ & \multicolumn{5}{c||}{\text{Existence of $\tilde{h}$ and $h$ is open;}} & $48^{**}$ & 35 & manual \\
$h(4,0; 4,0)$ & \multicolumn{5}{c||}{\text{current lower bounds are presented.}} & $90^{**}$ & 46 & manual \\ \hline
\end{tabular}

\scriptsize
\vskip 0.1cm
$\circ$ --- experiment in progress or not conducted due to data sufficiency; \\
$^*$ --- size of the largest signotope found (we assume these are maximal); \\
$^{**}$ --- size of the largest signotope found without a guarantee of maximality; \\
$^G$ / $^M$ --- use of Glucose / Minisat solvers on subformulas after decomposition; \\
$^{(4)}$ --- use of abscissa grid No. 4 instead of the default grid No. 1; \\
$|\mathcal{X}|$ --- cardinality (number of points) of the found geometric realization; \\
$t_{\mathcal{X}}$ --- coordinate search time using the linear subreduction method; \\
LSS --- Local Stochastic Search method; \\
manual --- solution found manually, computer used only for visualization.
\end{table}

To verify the most labor-intensive cases, we applied a method of decomposing the original logical formula into subproblems by fixing the coloring of the initial segment of points. In several cases (particularly for homogeneous multicolored problems), we utilized the {\it color equivalence} property, which allows, without loss of generality, fixing the color of the first point or specific combinations of initial colorings to reduce the number of subproblems:

\begin{itemize}
    \item For $\tilde{h}(3,2; 3,2; 3,2; 3,2) = 18$, the symmetry of the four colors allowed us to consider only five variants of fixing the colors of the first three points: $123, 122, 121, 112$, and $111$, instead of the theoretically possible $4^3 = 64$ initial colorings. The case $111$ was immediately excluded due to the obvious presence of a monochromatic empty triangle. The \textit{Glucose} solver (2023) completed the calculations in 24, 0.5, 7, and 5 CPUh, respectively.

    \item The value $\tilde{h}(4,0; 4,2) = 25$ was obtained by decomposition into 2981 subformulas with fixed colors for the first 12 points. Trivial sets (e.g., 5 consecutive points of the same color) were excluded from the search. The solution time for the subproblems varied from fractions of a second to 41,240 seconds, with total costs amounting to 563 CPUh.

    \item For $\tilde{h}(4, 0; 3, 0) = 26$, the problem was divided into 2387 subformulas (fixing colors for the first 13 points). Configurations with five consecutive points of the first color or three consecutive points of the second color were excluded. With a maximum time per subproblem of 34,600 seconds, the total costs for the \textit{Glucose} solver were 353 CPUh.

    \item The proof of $\tilde{h}_{nc}(4,0;4,0) = 26$ required decomposition into 1706 subproblems (fixing colors for the first 13 points). Due to color symmetry, the first point was always assigned color 1. Sets with four consecutive points of the same color were excluded. The final time (\textit{Glucose}) was 350 CPUh, with a maximum per subproblem of 48,600 seconds.

    \item Verification of $\tilde{h}_{nc}(4,2; 4,2; 4,2) = 22$ was performed through 23,907 subproblems (fixing colors for the first 12 points). Using symmetry for the first point and filtering trivial sets (6 consecutive points, 4 of which are monochromatic) allowed for optimization of the search. The \textit{MiniSat} solver utilized 880 CPUh (maximum 13,670 seconds).
\end{itemize}

For the critical cases mentioned above, the \textit{Glucose} and \textit{MiniSat} solvers do not reach an UNSAT result on the original formula without decomposition, even when significantly exceeding the total time limit. Modern latest-generation SAT solvers, such as \textit{CaDiCaL} and \textit{Kissat}, which use advanced inprocessing methods, are potentially capable of processing such problems without prior partitioning. Tests on lower-power instances confirm their high efficiency; however, the full verification of the most complex formulas in a single pass remains a subject of current research.

The application of the ASP approach enabled the direct verification of several labor-intensive cases. Unlike the \textit{Glucose} and \textit{MiniSat} solvers, which required partitioning the problem into thousands of subproblems, the \textit{clingo} system successfully handled the analysis of the original formulas in their entirety. Although the total runtime of \textit{clingo} in some scenarios exceeded the cumulative CPU time of parallel runs, the ability to verify without manual intervention in the search structure significantly increases the reliability and transparency of the experiment.

A comparison of the ultimate computational capabilities demonstrates a significant asymmetry: while for two-color problems modern solvers allow the verification of configurations up to $N=26$ points, the transition to four or more colors substantially constrains the scope of accessible computations to $N=18$. Such a sharp decrease in the threshold confirms that multi-coloring is a critical factor exponentially influencing the complexity of geometric models of this type.

Our experience confirms that the choice of the optimal preset in \textit{clingo} directly correlates with the number of colors: \texttt{frumpy} is most effective for 1–2 colors, while for 3 or more colors, \texttt{crafty} maintains a stable advantage.

Part of the results presented in the table was obtained at a stage before the implementation of the \texttt{--heuristic=Domain} optimization. Nevertheless, the achieved computational correctness allowed for the fixing of the sought values. Ongoing recalculations with the \texttt{Domain} parameter are aimed at refining the performance boundaries of the method. The only case that retained its exceptional laboriousness and required decomposition even within the ASP approach remains the configuration $\tilde{h}_{nc}(4,2; 4,2; 4,2)$.

\subsubsection{Search for Coordinates: Linear Subreduction Method and Alternative Approaches}

For the case $h_{nc}(4,0; 4,0)$, a geometric realization of 25 points was constructed using the linear subreduction method:
\coordinates{(0,$-$746538137,B), (1,$-$3046660,A), (2,$-$646999721,A), (3,3839774366,A), (4,3093155294,B), (5,7276998680,B), (6,5993695355,A), (7,8,B), (8,106660406,A), (9,$-$1386562914,B), (10,$-$2879830757,A), (11,18199678130,A), (12,16147587405,B), (13,639962389,B), (14,$-$2228740740,A), (15,$-$8,A), (16,12743409974,A), (17,11892365617,B), (18,12428608367,A), (19,24846857556,B), (20,27927600703,A), (21,35677096872,A), (22,38225025694,B), (23,$-$15821880854,B), (24,$-$16319040999,B).}

For the function $h(4,0; 3,0)$, a configuration of 25 points exhibiting 3-fold symmetry was found using the local stochastic search method; the linear subreduction method was not applied in this case. Coordinates:
\coordinates{(0,0,A), (126,$-$151,B), ($-$193,$-$33,B), (67,184,B), (130,$-$123,A), ($-$171,$-$51,A), (41,174,A), (62,249,A), (184,$-$178,A), ($-$246,$-$70,A), (126,$-$163,A), ($-$204,$-$27,A), (78,190,A), ($-$30,195,B), (183,$-$71,B), ($-$153,$-$123,B), (87,266,B), (185,$-$208,B), ($-$273,$-$57,B), ($-$492,$-$73,A), (183,463,A), (309,$-$390,A), ($-$353,$-$127,A), (67,369,A), (286,$-$242,A).}

The obtained results conclusively confirm the equalities:
\begin{equation}
h(4,0; 3,0)=26, \quad h_{nc}(4,0; 4,0)=26.
\end{equation}

For the instances $h(4,0; 4,3)$, $h(4,0; 4,2)$, and $h_{nc}(4,2; 4,2; 4,2)$, the upper bound of the signotope was not reached, resulting in only interval estimates:
\begin{equation}
20 \le h(4,0; 4,3) \le 21, \quad 23 \le h(4,0; 4,2) \le 25, \quad 21 \le h_{nc}(4,2; 4,2; 4,2) \le 22.
\end{equation}
For instances with a large number of points $N$, where the exact value of $\tilde{h}(\dots)$ and the upper bound remain unknown (lower part of the table), the gap between the estimates may be significantly higher.

In the search for realizations, priority was given to the linear grid of abscissae ($x_i = i$, parameter \texttt{xgrid=1}) to obtain less cumbersome integer solutions. However, in several cases, this strategy proved to be suboptimal. For instance, for the configurations $h(3,2; 3,2; 3,2; 3,2)$ and $h(3,4; 3,4; 3,4; 3,4; 3,4)$, no realization of 17 points was found over several months of computation using the linear grid, whereas the solutions were obtained within a few hours upon switching to grid No. 4.

Similarly, in the analysis of $h(4,0; 4,2)$ and $h(3,3; 3,3; 3,3; 3,3; 3,3)$, two instances of the solver were run in parallel for the first and fourth grids, respectively. In both cases, the computations on grid No. 4 were successful, while the processes on the linear grid were terminated due to timeout. The following section provides a theoretical justification for why the abscissa grid $x_i = i$ demonstrates low efficiency in some scenarios.

Despite the fact that the average complexity of the problem grows exponentially with increasing $N$, the distribution of search time is characterized by high variance. Cases of anomalously fast finding of answers at large $N$ are explained by the algorithm hitting narrow satisfiability zones in the early stages of the search tree traversal, which allows avoiding exhaustive analysis of conflicting configurations.

All found sets of coordinates are available in the project repository\footnote{\url{https://github.com/koshelevv/Erdos-Szekeres/tree/main/colored_points}}.

\subsection{clingo-lpx and Comparative Analysis of Abscissa Grids}

Although the \texttt{clingo-lpx} solver is inferior in performance to Z3, its use is considered appropriate in at least two scenarios:
\begin{itemize}
    \item the application of cardinal constraints in the logical program (for example, specifying a fixed or minimum number of monochromatic empty triangles in the target configuration);
    \item the need to generate the full solution space to collect statistical data (option \texttt{-n0}).
\end{itemize}

For illustration, let us introduce the notations $S_3(N)$, $S_4(N)$, $S_5(N)$ for the sets of signotopes that minimize the number of convex and empty 3-, 4-, and 5-gons on $N$ points, respectively (within the monochromatic model).

Below is the logical program for searching for elements of these sets, as well as their intersections and differences:

\begin{lstlisting}[language=clingo, caption={Program for analyzing extremal properties of signotopes}]
pt(0..n-1).

min_p3(3,1; 4,3; 5,7; 6,13; 7,21; 8,31; 9,43; 10,58; 11,75; 12,94; 13,114).
min_p4(3,0; 4,0; 5,1; 6,3; 7,6; 8,10; 9,15; 10,23; 11,32; 12,42; 13,51).

min_p5(9,0; 10,1; 11,2; 12,3; 13,3; 14,6; 15,9; 16,11).

1{l(A,B,C,R): R=(-1;1)}1 :- pt(A),pt(B),pt(C), A<B,B<C.

:- l(A,B,C,R), l(A,C,D,-R), l(B,C,D,R).
:- l(A,B,C,R), l(A,B,D,-R), l(A,C,D,R).
:- l(A,B,C,R), l(A,B,D,-R), l(B,C,D,R).
:- l(A,B,D,R), l(A,C,D,-R), l(B,C,D,R).
:- l(1,B,C,-1), sb!=off.

i(A,B,C,X) :- l(A,X,B,R), l(A,X,C,-R), B<C.
i(A,B,C,X) :- l(B,X,C,R), l(A,X,C,-R), A<B.

ne(A,B,C) :- i(A,B,C,X).

p3(A,B,C) :- pt(A),pt(B),pt(C), A<B,B<C, not ne(A,B,C).

:- #count{A,B,C: p3(A,B,C)}=M, min_p3(n,M), p3=-1.
:- #count{A,B,C: p3(A,B,C)}>M, min_p3(n,M), p3= 1.

p4(A,B,C,D) :- l(A,B,C,R), l(B,C,D,R), A<B,B<C,C<D, not ne(A,B,C), not ne(A,C,D).
p4(A,B,C,D) :- l(A,B,D,R), l(A,C,D,-R), A<B,B<C,C<D, not ne(A,B,C), not ne(B,C,D).

:- #count{A,B,C,D: p4(A,B,C,D)}=M, min_p4(n,M), p4=-1.
:- #count{A,B,C,D: p4(A,B,C,D)}>M, min_p4(n,M), p4= 1.

p5(A,B,C,D,E) :- l(A,B,C,R), l(B,C,D,R), l(C,D,E,R), A<B,B<C,C<D,D<E, not ne(A,B,C), not ne(A,C,D), not ne(A,D,E).
p5(A,B,C,D,E) :- l(A,B,C,R), l(B,C,E,R), l(A,D,E,-R), A<B,B<C,C<D,D<E, not ne(A,B,C), not ne(A,C,E), not ne(A,D,E).
p5(A,B,C,D,E) :- l(A,B,D,R), l(B,D,E,R), l(A,C,E,-R), A<B,B<C,C<D,D<E, not ne(A,B,D), not ne(A,D,E), not ne(A,C,E).
p5(A,B,C,D,E) :- l(A,C,D,R), l(C,D,E,R), l(A,B,E,-R), A<B,B<C,C<D,D<E, not ne(A,C,D), not ne(A,D,E), not ne(A,B,E).

:- #count{A,B,C,D,E: p5(A,B,C,D,E)}=M, min_p5(n,M), p5=-1.
:- #count{A,B,C,D,E: p5(A,B,C,D,E)}>M, min_p5(n,M), p5= 1.
\end{lstlisting}

One of the central tasks in this field is the determination of the value $X_k(n)$ --- the minimum number of empty $k$-gons among all possible configurations of $n$ points in general position.

A significant milestone in studying this problem was the work of Dehnhardt \cite{Dehnhardt1987}, which first attempted to systematize the search for minimum values for small $n$ and formulated several hypotheses on the relationship between optimal point sets. In particular, the so-called Dehnhardt question was actively discussed in the literature: must a set of points that minimizes the number of convex and empty $k$-gons ($X_k$) also be minimizing for $j$-gons ($X_j$) when $k \neq j$? Dehnhardt assumed the existence of universal extremal configurations; however, his hypotheses regarding the exact values for $n=12$ (specifically, $X_3(12)=95$ and $X_4(12)=44$) were subsequently adjusted.

With the development of computational geometry and the creation of the Order Type Database by Aichholzer and colleagues \cite{Aichholzer2002, Aichholzer2013}, the verification of these assumptions on full samples became possible. It was proved that for $n=12$, the direct connection between minima is broken: there exists a configuration that minimizes the number of pentagons ($X_5(12)=3$), but is not optimal for triangles ($X_3(12)=94$) and quadrilaterals ($X_4(12)=42$).

\begin{theorem}
For $3 \le N \le 11$, the equality $S_3(N) = S_4(N)$ holds. Furthermore, the relationship $S_3(9) = S_4(9) = S_5(9)$ is true.
\end{theorem}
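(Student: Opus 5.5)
The statement is a finite, computational one, and I would prove it with the ASP program just listed. It is a set equality between two families of signotopes. For each $N$ with $3\le N\le 11$ we need $S_3(N)\subseteq S_4(N)$ and $S_4(N)\subseteq S_3(N)$, and for $N=9$ additionally $S_4(9)\subseteq S_5(9)$ and $S_5(9)\subseteq S_3(9)$. Each inclusion $S_j(N)\subseteq S_k(N)$ reduces to the non-existence of a signotope that is $j$-minimal but not $k$-minimal.

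The first step is to pin down the minima. The tables \texttt{min\_p3}, \texttt{min\_p4}, \texttt{min\_p5} are taken from the known values of $X_k(n)$ (Dehnhardt, and the Order Type Database of Aichholzer et al.). For signotopes these are only valid lower bounds if no non-realizable signotope has fewer empty $k$-gons. So I would first certify each minimum within the signotope model itself. Running the program with \texttt{p3=-1} replaced by the constraint ``count $<M$'' should give UNSAT. Running with \texttt{p3=1}, i.e. count $\le M$, should give SAT, with a witness whose realizability we do not even need. The same is done for \texttt{p4} and \texttt{p5}.

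With the minima fixed, the program's two switches encode the two sides of the inclusion. Setting \texttt{p}$j$\texttt{=1} forces the count of empty $j$-gons to equal the minimum, i.e. membership in $S_j(N)$. Setting \texttt{p}$k$\texttt{=-1} forbids the count of empty $k$-gons from equalling the minimum, i.e. non-membership in $S_k(N)$. Hence the equality $S_3(N)=S_4(N)$ amounts to UNSAT of the instances \texttt{p3=1,p4=-1} and \texttt{p4=1,p3=-1} for every $N\le 11$. The $N=9$ claim additionally needs UNSAT for \texttt{p4=1,p5=-1} and \texttt{p5=1,p3=-1}; together with the previous pair these close the cycle of inclusions.

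Symmetry breaking needs care here. The listing fixes $l(1,B,C)=1$, which is a slightly unusual normalization compared with the $L_{0bc}$ normalization of the Symmetry Breaking subsection. I would check that it is justified by Scheucher's renumbering result. That argument works because the counts of empty $k$-gons are invariant under the reorientation and relabelling used to obtain the normalization. To be safe, one can simply run with \texttt{sb=off} for the small $N$, where the cost is negligible.

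I expect the hardest part to be the computational cost at $N=10,11$. There the \texttt{\#count} aggregates over up to $\binom{11}{4}=330$ quadruples create a heavy cardinality constraint. I would address this first with \texttt{--heuristic=Domain} and the \texttt{crafty}/\texttt{frumpy} presets, and if needed by decomposing on the orientations of the first few triples. A conceptual subtlety also remains: equality here is between sets of abstract signotopes. Since all signotopes with $N\le 8$ are realizable this is harmless for small $N$, but for $N=9,10,11$ I would state the result at the level of signotopes, and separately remark that it transfers to point sets provided the tabulated minima are attained by realizable configurations, which the database guarantees.
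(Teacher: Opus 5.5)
Your plan is essentially the paper's proof: the same \texttt{minimize.lp} instances (\texttt{p3=1, p4=-1} and \texttt{p3=-1, p4=1} for each $3\le N\le 11$, plus two more at $N=9$) are run and UNSAT is read as emptiness of the corresponding set differences, and closing the $N=9$ chain with \texttt{p5=1, p3=-1} instead of the paper's \texttt{p4=-1, p5=1} is equivalent once $S_3(9)=S_4(9)$ is known. Your separate certification of the minima is harmless but only its SAT half is needed, because each UNSAT pair already rules out counts below the tabulated values $M_3, M_4$: a signotope with fewer than $M_3$ empty triangles would have exactly $M_4$ empty quadrilaterals by the first instance, and hence exactly $M_3$ empty triangles by the second, which is a contradiction (and symmetrically for quadrilaterals).
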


\begin{proof}
To verify this statement, we calculate the differences of the specified pairs of sets using the developed logical program:
\begin{lstlisting}[language=bash]
for n in `seq 3 11`; do
  clingo minimize.lp --configuration=crafty -c p3=1 -c p4=-1 -c n=$n;
  clingo minimize.lp --configuration=crafty -c p3=-1 -c p4=1 -c n=$n;
done

clingo minimize.lp --configuration=crafty -c p4=1 -c p5=-1 -c n=9
clingo minimize.lp --configuration=crafty -c p4=-1 -c p5=1 -c n=9
\end{lstlisting}
For all instances, the verdict \texttt{UNSATISFIABLE} was obtained, indicating the emptiness of the corresponding differences. Thus, any configuration of $3 \le N \le 11$ points that minimizes the number of empty triangles is also minimizing for convex and empty quadrilaterals, and vice versa.
\end{proof}

Verification of this fact for $N > 11$ requires significant computational resources and was not conducted within the scope of this study. Nevertheless, local stochastic search, run for $N = 12$ and $N = 13$ over several days, did not reveal any counterexamples.

For the cases $N = 10$ and $N = 11$, a strict inclusion $S_4(N) \subsetneq S_5(N)$ was established. Using the \texttt{clingo-lpx} extension, we found the coordinates of configurations that achieve the minimum number of convex and empty pentagons ($X_5(10) = 1$ and $X_5(11) = 2$, respectively), while the number of triangles and quadrilaterals is not optimal: for $N = 10$, 59 and 24 were obtained (instead of $X_3 = 58$ and $X_4 = 23$), and for $N = 11$ --- 76 and 33 (instead of $X_3 = 75$ and $X_4 = 32$).

\begin{lstlisting}[language=bash]
clingo minimize.lp --configuration=crafty -c p5=-1 -c p4=1 -c n=10
UNSATISFIABLE
clingo-lpx lpx minimize.lp --configuration=crafty -c p5=1 -c p4=-1 -c n=10 -c xgrid=1
SATISFIABLE
y(0)=2023/3 y(1)=3673/6 y(2)=551 y(3)=459 y(4)=359 y(5)=525/2 y(6)=1577/8 y(7)=90 y(8)=0 y(9)=0

clingo minimize.lp --configuration=crafty -c p5=-1 -c p4=1 -c n=11
UNSATISFIABLE
clingo-lpx lpx minimize.lp --configuration=crafty -c p5=1 -c p4=-1 -c n=11 -c xgrid=1
SATISFIABLE
y(0)=47081/5 y(1)=-169/5 y(2)=361/3 y(3)=2261591/420 y(4)=1 y(5)=-7001731/420
y(6)=-2260751/210 y(7)=45 y(8)=15692/15 y(9)=0 y(10)=0
\end{lstlisting}

To conclude this section, we use the problem of minimizing empty polygons for a comparative analysis of the efficiency of various abscissa grids. Using the \texttt{clingo-lpx} solver, we calculated the number of signotopes from the set $S_3(N) = S_4(N)$ that possess a geometric realization for each grid with the \texttt{xgrid} parameter from 1 to 12 for $4 \le N \le 10$. The results are presented in Table~\ref{tab:grids}.

\begin{lstlisting}[language=bash]
for n in `seq 4 10`; do
  for xgrid in `seq 0 12`; do
    clingo-lpx minimize.lp lpx --configuration=crafty -n0 --quiet=2 --enum-mode=bt -c p4=1 -c n=$n -c xgrid=$xgrid;
  done;
done
\end{lstlisting}

\begin{table}[ht!]
\centering
\begin{tabular}{|l|ccccccc|}
\hline
N & 4 & 5 & 6 & 7 & 8 & 9 & 10 \\
\hline
xgrid=0 & 4 & 22 & 224 & 2604 & 21408 & 31884 & 1937396 \\
xgrid=1 & 4 & 22 & 212 & 2056 & 11876 & 7144 & 165048 \\
xgrid=2 & 4 & 22 & 212 & 2064 & 13128 & 11000 & 335908 \\
xgrid=3 & 4 & 22 & 212 & 2220 & 15304 & 17964 & 598560 \\
xgrid=4 & 4 & 22 & 220 & 2408 & 16640 & 20524 & 688088 \\
xgrid=5 & 4 & 22 & 220 & 2416 & 17076 & 21104 & 718560 \\
xgrid=6 & 4 & 22 & 220 & 2420 & 17160 & 21344 & 725812 \\
xgrid=7 & 4 & 22 & 220 & 2424 & 17164 & 21372 & 727884 \\
xgrid=8 & 4 & 22 & 220 & 2424 & 17172 & 21388 & 729004 \\
xgrid=9 & 4 & 22 & 220 & 2424 & 17172 & 21376 & 729196 \\
xgrid=10 & 4 & 22 & 220 & 2424 & 17172 & 21376 & 729528 \\
xgrid=11 & 4 & 22 & 220 & 2424 & 17172 & 21376 & 729580 \\
xgrid=12 & 4 & 22 & 220 & 2424 & 17172 & 21376 & 729588 \\
\hline
\end{tabular}
\vskip 0.1cm
\footnotesize $xgrid=0$ --- corresponds to the total number of abstract signotopes (without abscissa constraints).
\caption{Number of realizable signotopes from $S_4(N)$ depending on the choice of abscissa grid.}
\label{tab:grids}
\end{table}

According to the data obtained, the uniform abscissa grid ($xgrid=1$) possesses the lowest realizability coverage (realizes the minimum number of signotopes). As the exponential spacing parameter of the grid increases, the number of successfully found realizations monotonically increases, confirming the advantage of non-linear grids in verifying complex configurations.

\section{Convex Hexagons}
\label{sec:results_h}

In this section, the notation $\varhexagon_k$ is used for a convex hexagon containing exactly $k$ points of the set $\mathcal{X}$ in its interior. The study of the properties of such configurations was conducted using Answer Set Programming (ASP) methods.

\begin{proof}[\bf Proof of Theorem \ref{h61}]
To establish the equalities $h(6,2)=17$ and $h(6,1)=18$, we used the ASP model presented below. Technical optimization in this case consisted of checking the number of interior points not in the entire hexagon $ABCDEF$, but only in one of its base triangles (for example, $\triangle ACE$). The running time of the program for parameters $(k=2, n=17)$ and $(k=1, n=18)$ was 70 and 190 minutes, respectively.

\begin{lstlisting}[caption={ASP code for verifying the values of $h(6,k)$}]
pt(1..n).
1{l(A,B,C,R): R=(-1;1)}1 :- pt(A),pt(B),pt(C),A<B,B<C.

% GEOMETRIC CONSTRAINTS
:- l(A,B,C,R), l(A,C,D,-R), l(B,C,D,R).
:- l(A,B,C,R), l(A,B,D,-R), l(A,C,D,R).
:- l(A,B,C,R), l(A,B,D,-R), l(B,C,D,R).
:- l(A,B,D,R), l(A,C,D,-R), l(B,C,D,R).
:- l(1,B,C,-1), sb!=off.

% DEFINITION OF INTERIOR POINTS
i(A,B,C,X) :- l(A,X,B,R), l(A,X,C,-R), B<C.
i(A,B,C,X) :- l(B,X,C,R), l(A,X,C,-R), A<B.

tr(A,B,C) :- pt(A), pt(B), pt(C), A<B,B<C, {i(A,B,C,X)}<=k.

:- l(A,B,C,R), l(B,C,D,R), l(C,D,E,R),  l(D,E,F,R),  A<B,B<C,C<D,D<E,E<F, tr(A,C,E).
:- l(A,B,C,R), l(B,C,D,R), l(C,D,F,R),  l(A,E,F,-R), A<B,B<C,C<D,D<E,E<F, tr(B,D,E).
:- l(A,B,C,R), l(B,C,E,R), l(C,E,F,R),  l(A,D,F,-R), A<B,B<C,C<D,D<E,E<F, tr(B,D,E).
:- l(A,B,D,R), l(B,D,E,R), l(D,E,F,R),  l(A,C,F,-R), A<B,B<C,C<D,D<E,E<F, tr(B,C,E).
:- l(A,C,D,R), l(C,D,E,R), l(D,E,F,R),  l(A,B,F,-R), A<B,B<C,C<D,D<E,E<F, tr(B,C,E).
:- l(A,B,C,R), l(B,C,F,R), l(A,D,E,-R), l(D,E,F,-R), A<B,B<C,C<D,D<E,E<F, tr(A,C,E).
:- l(A,B,D,R), l(B,D,F,R), l(A,C,E,-R), l(C,E,F,-R), A<B,B<C,C<D,D<E,E<F, tr(A,D,E).
:- l(A,C,D,R), l(C,D,F,R), l(A,B,E,-R), l(B,E,F,-R), A<B,B<C,C<D,D<E,E<F, tr(A,D,E).

#show l/4.
\end{lstlisting}

\vskip+1cm

\begin{lstlisting}[language=bash, caption={Verification logs in clingo}]
clingo ES_hexagons.lp --configuration=frumpy --sat-p=3 -c k=2 -c n=16
SATISFIABLE
clingo ES_hexagons.lp --configuration=frumpy --sat-p=3 -c k=2 -c n=17
UNSATISFIABLE
clingo ES_hexagons.lp --configuration=frumpy --sat-p=3 -c k=1 -c n=17
SATISFIABLE
clingo ES_hexagons.lp --configuration=frumpy --sat-p=3 -c k=1 -c n=18
UNSATISFIABLE
\end{lstlisting}

Using the linear subreduction method (with $x_i = i$), we successfully constructed an example of 17 points containing neither $\varhexagon_0$ nor $\varhexagon_1$ in just a few minutes. A similar example was provided in \cite{h61_comp}, but its manual search at that time took several weeks. The coordinates of the found configuration:
\begin{center}
{\small (0,$-$114449), (1,$-$193125), (2,$-$98112), (3,$-$90290), (4,$-$102071), (5,$-$496), (6,$-$769), (7,115376), (8,96152), (9,$-$8702), (10,662056), (11,347088), (12,32056), (13,0), (14,0), (15,8206), (16,192)}
\end{center}
\end{proof}

Note that from the proven equality $h(6,2)=17$, the values $h(6,k)=17$ follow immediately for all $k>2$, since the condition of having exactly $k$ points inside a hexagon for $k \le 2$ is a more rigid constraint for a set of 17 points.

Since $h(6,2)=17$, any set of 17 points in the plane must contain at least one hexagon $\varhexagon_k$ for $k \in \{0, 1, 2\}$. We studied the question of the existence of configurations of 17 points containing a \textbf{unique} convex hexagon of a specific type:

\begin{itemize}
\item For the case $\varhexagon_0$, the answer is positive; an example is easily constructed based on the classical configuration for $g(6)=17$: {\small (0,0), (9,1), (20,2), (30,2), (41,1), (50,0), (60,48), (65,49), (70,48), (80,52), (85,51), (90,52), (0,99), (9,98), (20,97), (30,97), (41,98)}
\item For the case $\varhexagon_1$, the answer is also positive. The configuration was found by us using the proposed method: {\small (0,$-$6091), (1,$-$1), (2,0), (3,$-$4504), (4,315), (5,109787), (6,$-$1771), (7,73098), (8,$-$2), (9,48726), (10,44), (11,$-$276), (12,22), (13,$-$13), (14,1881), (15,2339), (16,$-$18)}.
\item For the case $\varhexagon_2$, the answer is negative, which strictly follows from the structural results provided below.
\end{itemize}

Below are the coordinates of configurations containing \textbf{only} the specified types of hexagons:

{\bf $\{\varhexagon_1, \varhexagon_2\}$ --- 18 points:}
{\small (0,1905), (1,1419), (2,937), (3,$-$13196), (4,$-$12255), (5,$-$1422), (6,$-$1383), (7,$-$1341), (8,$-$38634), (9,0), (10,$-$6802), (11,$-$3412), (12,$-$78970), (13,$-$4077), (14,$-$99211), (15,$-$1268), (16,$-$971), (17,0)}.

{\bf $\{\varhexagon_1, \varhexagon_2, \varhexagon_3\}$ --- 19 points:}
{\small (172,$-$83), (170,112), (82,$-$123), (36,$-$42), ($-$191,29), (250,$-$294), (135,$-$69), (249,160), (15,$-$58), (89,$-$115), (102,9), (209,$-$208), ($-$260,32), (135,20), (239,153), (114,117), (296,204), ($-$110,32), (35,$-$41)}.

{\bf $\{\varhexagon_1, \varhexagon_2, \varhexagon_3, \varhexagon_4\}$ --- 20 points:}
{\small (140,33), ($-$194,95), ($-$29,$-$30), (209,$-$218), (106,115), (31,28), (79,166), (204,228), (140,56), (201,191), ($-$250,100), (165,12), (229,266), ($-$167,88), (124,69), (232,$-$273), (154,$-$139), (94,137), (194,188), (186,186)}.

\subsection{The Fourth Erdős--Szekeres-type Problem}

\textbf{The Fourth Erdős--Szekeres-type Problem.} \textit{For integers $n \ge 3$ and $q \ge 2$, find the smallest positive integer $h_{mod}(n,q)$ such that any set of points ${\cal X}$ in the plane in general position with cardinality $|{\cal X}| \ge h_{mod}(n,q)$ contains the vertices of a convex $n$-gon $C$ for which $|(C \setminus \partial C) \cap {\cal X}| \equiv 0 \pmod q$.}

The Bialostocki--Dierker--Voxman conjecture \cite{BDV} states that the value $h_{mod}(n, q)$ exists for all $n \ge 3, q \ge 2$. The authors proved it for the case $n \ge q+2$, establishing an upper bound via Ramsey numbers:
\begin{equation}
h_{mod}(n, q) \le g(R_3(n', n', \dots, n')),
\label{BDV_est}
\end{equation}
where $n' \ge n$ and $n' \equiv 2 \pmod q$.

The bound obtained by Caro \cite{Caro} for points with weights from an Abelian group, applied to this problem, also exhibits a tower of exponents character due to its dependence on the Ramsey numbers $R_2$:
\begin{equation}
h_{mod}(n, q) \le g\left(\left(R_2(3q-3, \dots, 3q-3) + 1\right) \left(\left\lfloor \frac{n}{q} \right\rfloor + 1\right) q\right).
\label{Caro_est}
\end{equation}

Further research aimed to refine these estimates and relax the condition $n \ge q+2$. Károlyi, Pach, and Tóth \cite{KPT} showed the existence of $h_{mod}(n, q)$ for $n \ge 5q/6 + O(1)$, although this result did not improve the exponential nature of the bounds.

In 2011, one of the authors \cite{mod} improved the technique of Bialostocki, Dierker, and Voxman:

\begin{theorem} \label{mod_tech}
If $n \ge q+2$, then for even and odd $q$, respectively:
$$h_{mod}(n, q) \le R_3(n, n, \dots, n), \quad h_{mod}(n, q) \le R_3(g(n), n, \dots, n).$$
\end{theorem}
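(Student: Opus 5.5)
The plan follows the Bialostocki--Dierker--Voxman scheme: colour triples of $\mathcal{X}$ by residues and apply Ramsey's theorem for $3$-uniform hypergraphs. Let $N=|\mathcal{X}|$ be at least the Ramsey number in the statement. For a triple $\{a,b,c\}$, let $t(abc)$ be the number of points of $\mathcal{X}$ strictly inside $\triangle abc$, and colour the triple by $t(abc)\bmod q$. This gives $q$ colours $0,1,\dots,q-1$. Ramsey's theorem then yields a subset $S\subseteq\mathcal{X}$ all of whose triangles have the same residue $r$. For even $q$ we may take $|S|=n$. For odd $q$ we take $|S|=g(n)$ if $r$ is the one distinguished residue identified below, and $|S|=n$ otherwise.

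\textbf{Step 1: convexity of homogeneous sets.} Suppose $d\in S$ lies inside the triangle $abc$ with $a,b,c\in S$. The three triangles $abd$, $bcd$, $cad$ partition $\triangle abc$ up to segments, which contain no points by general position. Counting interior points gives $t(abc)=t(abd)+t(bcd)+t(cad)+1$, hence $r\equiv 3r+1$, that is, $2r\equiv -1 \pmod q$.
\begin{itemize}
\item For even $q$ this congruence has no solution. So every $4$-subset of $S$ is in convex position, and therefore $S$ itself is a convex $n$-gon.
\item For odd $q$ the congruence has exactly one solution, $r_0=(q-1)/2$. This is why $r_0$ receives the larger Ramsey parameter $g(n)$. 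If $r\ne r_0$, the argument above applies and $S$ is convex. If $r=r_0$, we have $|S|\ge g(n)$, and the definition of $g(n)$ gives $n$ points of $S$ in convex position. Every triangle on these points still has residue $r_0$.
\end{itemize}
In either case we obtain a convex $n$-gon $C$ all of whose vertex triangles have the same residue $r$.

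\textbf{Step 2: counting interior points.} Triangulate $C=v_1\dots v_n$ by the fan from $v_1$. No point of $\mathcal{X}$ lies on a diagonal, so
\[
|(C\setminus\partial C)\cap\mathcal{X}|=\sum_{i=2}^{n-1} t(v_1v_iv_{i+1})\equiv (n-2)r \pmod q .
\]
The same computation shows that every convex $k$-gon spanned by vertices of $C$ has interior count $\equiv (k-2)r$. In particular, any $q+2$ vertices of $C$ span a convex $(q+2)$-gon with interior count $\equiv qr\equiv 0$. This is where the hypothesis $n\ge q+2$ is needed.

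\textbf{Main obstacle.} For a general residue $r$, the quantity $(n-2)r$ need not vanish modulo $q$. Step 2 therefore guarantees a good $(q+2)$-gon, not an $n$-gon. To bridge this gap I would first try the original BDV device. The homogeneous convex set contains good $(q+2)$-gons, and one enlarges such a polygon by adjoining further vertices in blocks whose contribution to the interior count is $\equiv 0$. Block sizes are chosen so that the total number of vertices is exactly $n$. The enlargement would be organised via the fan identity, using that each added ear triangle contributes $r$. If this does not close up for every residue class of $n$, I would refine the colouring. Instead of $t\bmod q$ alone, the colour would also record the position/orientation data of the triple, so that the homogeneous set forces $(n-2)r\equiv 0$ directly; in the even case this must be done without increasing the number of colours beyond those counted in $R_3(n,\dots,n)$. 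Verifying that this bookkeeping does not add colours is the step I expect to require the most care.
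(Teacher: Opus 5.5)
Your Step~1 is correct, and it explains the shape of the bound. A point of $S$ inside a triangle of $S$ forces $2r\equiv -1 \pmod q$. This is impossible for even $q$ and singles out $r_0=(q-1)/2$ for odd $q$, which is why exactly one argument of the Ramsey number is $g(n)$. Step~2 is also correct.

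The gap is that the argument stops where the real work begins. As long as you use only vertices of the homogeneous set $C$, every convex $k$-gon spanned by them has interior count $\equiv (k-2)r$, however you group the vertices into blocks. When $\gcd(r,q)=1$ this is $\equiv 0$ only for $k\equiv 2 \pmod q$. So your first route, enlarging a good $(q+2)$-gon by blocks of vertices of $C$, can never reach other values of $n$; it just reproduces the Bialostocki--Dierker--Voxman restriction $n'\equiv 2\pmod q$. The second route is only a hope. The number of colour classes is fixed by the Ramsey number, and a homogeneous class with $(n-2)r\not\equiv 0$ is a legitimate outcome of Ramsey's theorem. It has to be handled geometrically, not by recolouring. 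Note also that your use of $n\ge q+2$ (to obtain a good $(q+2)$-gon) is not where the hypothesis is actually needed.

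The missing idea is to leave $C$ and use points of $\mathcal{X}$ lying in its ears. Let $C=v_1\dots v_n$ and assume $r\not\equiv 0$; otherwise $C$ itself works.
\begin{itemize}
\item Every ear $\triangle v_{i-1}v_iv_{i+1}$ then contains at least one point of $\mathcal{X}$. Let $p_i$ be such a point closest to the line $v_{i-1}v_{i+1}$, so that $\triangle v_{i-1}p_iv_{i+1}$ is empty.
\item Take a set $I$ of pairwise non-adjacent indices and replace $v_i$ by $p_i$ for all $i\in I$. This gives a convex $n$-gon: each $p_i$ lies inside its ear, beyond the edge $v_{i-1}v_{i+1}$ of $P'=\operatorname{conv}\{v_k : k\notin I\}$, and these ears have disjoint interiors.
\item The interior points of the new polygon are exactly those of $P'$, so its count is $\equiv (n-|I|-2)r$.
\item Now take $|I|=j$ with $0\le j\le q-1$ and $j\equiv n-2\pmod q$. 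Write $n-2=aq+j$; here $a\ge 1$, and this is exactly where $n\ge q+2$ enters.
\item Then $j\le\min(q-1,\,n-2-q)\le (n-3)/2$, so $j$ pairwise non-adjacent vertices of the $n$-cycle exist, and the count becomes $\equiv aqr\equiv 0\pmod q$.
\end{itemize}
Applied to the homogeneous convex $n$-set from your Step~1 (in both parities), this yields the stated bounds.
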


The main result of \cite{mod} was the complete elimination of the dependence on Ramsey numbers under a slightly stronger constraint on $n$:

\begin{theorem} \label{mod_best}
If $n \ge 2q-1$, then $h_{mod}(n, q) \le g(q(n-4)+4)$.
\end{theorem}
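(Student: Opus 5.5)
We propose to prove Theorem~\ref{mod_best} by reducing to a large convex polygon and then doing a purely combinatorial counting argument inside it. Let $M=q(n-4)+4$ and let $\mathcal{X}$ have at least $g(M)$ points. By the first Erdős--Szekeres problem, $\mathcal{X}$ contains a convex $M$-gon $P$ with vertices $v_1,\dots,v_M$ in cyclic order. From now on we consider only convex $n$-gons whose vertices are chosen among the $v_i$. Every such polygon is a subpolygon of $P$, and its interior points are points of $\mathcal{X}$ lying in $\operatorname{conv}(P)$. The aim is to show that among these polygons at least one has interior count $\equiv 0 \pmod q$.

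\textbf{Step 1: triangulation bookkeeping.} For a convex polygon $C=u_1\dots u_n$ with vertices among the $v_i$, fan-triangulate it from $u_1$. The interior count $|\mathrm{int}\,C\cap\mathcal{X}|$ then equals the sum of the counts inside the triangles $u_1u_ju_{j+1}$, plus the number of points lying on the diagonals $u_1u_j$, plus the number of vertices of $P$ strictly inside $C$. In general position only the last two terms can be nonzero, and since the vertices of $P$ are in convex position, no $v_i$ lies inside $C$. Hence the count is additive over triangles. More usefully, write $C$ as $P$ minus the ``caps'' cut off by the edges of $C$. Let $w(i,j)$ be the number of points of $\mathcal{X}$ in the region of $P$ cut off by the chord $v_iv_j$, counting the vertices $v_{i+1},\dots,v_{j-1}$ and all interior points of that cap. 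Then
\[
|\mathrm{int}\,C\cap\mathcal{X}| = |\mathcal{X}\cap \operatorname{conv} P| - n - \sum_{\text{edges } v_iv_j \text{ of } C} w(i,j),
\]
so only the residues of the cap weights modulo $q$ matter.

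\textbf{Step 2: grouping.} Split the vertex sequence as follows. Fix the four vertices $v_1,v_2$ and $v_{M-1},v_M$ (this accounts for the ``$+4$''). Divide the remaining $q(n-4)$ consecutive vertices into $n-4$ blocks of $q$ consecutive vertices each. We build $C$ from the four fixed vertices together with one vertex from each block. Choosing the vertex $v$ within a block shifts the cap weights of the two chords meeting at $v$. Consider the partial sums of the running weight as $v$ moves through the $q$ positions of a block. By the pigeonhole/Erdős--Ginzburg--Ziv flavour argument on $q$ consecutive partial sums, we expect to be able to adjust the total residue by an arbitrary amount, or to reach residue $0$ directly. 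The hypothesis $n\ge 2q-1$ should guarantee at least $n-4\ge 2q-5$ blocks, which is enough freedom. Concretely, we want the set of achievable residues to grow by at least one with each block, in the spirit of Cauchy--Davenport, until it covers all of $\mathbb{Z}_q$. If that fails, we would use the subset-sum lemma that among $2q-1$ elements of $\mathbb{Z}_q$ some $q$ sum to $0$, which is the Erdős--Ginzburg--Ziv theorem.

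\textbf{Main obstacle.} The hard step is Step 2. Choices in adjacent blocks interact through the shared chord, so the contributions are not independent summands. In addition, if the within-block shifts are all $0 \bmod q$, the achievable residue set fails to grow. To handle this we would first try to make the dependence telescoping by encoding each choice as a cumulative prefix weight, so that the total becomes a sum of $n-4$ independent terms plus boundary terms. In the degenerate case, where some block gives constant residue, that block yields a cap whose weight is $\equiv 0$, and we would remove those points, using the count of $n$ to absorb it. This is where the exact constant $2q-1$ should enter, through EGZ applied to the $2q-1$ relevant residues.
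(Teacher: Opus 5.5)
The real problem is your opening reduction. After fixing an arbitrary convex $M$-gon $P$, you consider only $n$-gons whose vertices are vertices of $P$, and only points of $\mathcal{X}\cap\operatorname{conv}P$. The statement you then set out to prove is false, so Step~2 cannot succeed however it is organized.

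Two instances show this:
\begin{enumerate}
\item Take $q=2$, $n=4$ (allowed, since $n\ge 2q-1$), so $M=4$. If $P$ is a quadrilateral with one interior point, your only candidate is $P$ itself, whose count is odd. Yet a quadrilateral with no interior points exists, using the inner point as a vertex.
\item Take $q=3$, $n=5$, so $M=7$. Let $P=v_1\dots v_7$ be a regular heptagon whose centre $c$ is the only point of $\mathcal X$ inside $P$. This is in general position because $7$ is odd; put the remaining points of $\mathcal X$ outside $P$. Any pentagon on the vertices of $P$ has consecutive vertices at most three edges apart, i.e.\ arcs of at most $3\cdot 360^\circ/7<180^\circ$, so it contains $c$. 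Hence all $21$ such pentagons have exactly one interior point, and $1\not\equiv 0 \pmod 3$. This includes your three pentagons $v_1v_2v_kv_6v_7$, $k=3,4,5$. The pentagon that works is $c\,v_1v_2v_3v_4$, which is convex and empty.
\end{enumerate}
So a correct proof must let points lying inside $P$ become vertices of the target $n$-gon.

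Step~2 is also not yet an argument. Erd\H{o}s--Ginzburg--Ziv and Cauchy--Davenport are invoked without a family of $n$-vertex polygons whose counts really are sums of independently chosen residues; you note yourself that adjacent blocks interact. In the degenerate case, ``removing points'' of $\mathcal X$ has no meaning.

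A move that supplies the missing freedom is the following. Suppose the ear $u_{i-1}u_iu_{i+1}$ of a convex polygon $C$ contains $x\ge 1$ points. Replace $u_i$ by the point of that ear nearest to the line $u_{i-1}u_{i+1}$. The result is a convex polygon with the same number of vertices and exactly $x$ fewer interior points, because the triangle $u_{i-1}\,e\,u_{i+1}$ is then empty. Deleting $u_i$ instead loses the same $x$ points and one vertex. This decouples the vertex count from the removed weight, which is what a residue-counting argument needs.

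In example (ii): deleting $v_5$ and $v_6$ costs nothing, since their ears are empty. The ear at $v_7$ of $v_1v_2v_3v_4v_7$ contains $c$, and replacing $v_7$ by $c$ gives the empty pentagon.

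The paper does not prove this theorem itself; it only quotes it from \cite{mod}. Your argument therefore has to stand entirely on its own.
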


Since $g(q(n-4)+4) \le 2^{qn+O(1)}$, this theorem is significantly more efficient than all previous results, as it removes the multiple exponents from the final expression.

\subsubsection{New Results}

To prove stronger statements and optimize logical programs (by transitioning from counting points in hexagons to analyzing their constituent quadrilaterals), we formulate two auxiliary problems:

\begin{enumerate}
\item $h_{ex}(n, Q)$ --- the minimum number of points guaranteeing the existence of a convex $n$-gon whose number of interior points belongs to the set $Q \subset \mathbb{N}$ ($0 \in Q$).
\item $h_{sub}(6, q)$ --- the minimum number of points guaranteeing the existence of a convex hexagon in which at least one of the three quadrilaterals, obtained by removing a pair of opposite vertices, contains exactly $0$ or $q$ interior points.
\end{enumerate}

For signotopes, the corresponding functions are denoted as $\tilde{h}_{mod}(n,q), \tilde{h}_{ex}(n,Q), \tilde{h}_{sub}(6,q)$. The following chains of inequalities are obvious:
\begin{gather*}
h_{mod}(6,q) \le h_{ex}(6,\{0,q\}) \le h_{sub}(6,q) \le h(6)=30; \\
\tilde{h}_{mod}(6,q) \le \tilde{h}_{ex}(6,\{0,q\}) \le \tilde{h}_{sub}(6,q) \le \tilde{h}(6)=30.
\end{gather*}

\begin{theorem}
$h_{sub}(6,2)=\tilde{h}_{sub}(6,2)=18, \quad h_{sub}(6,3)=\tilde{h}_{sub}(6,3)=20, \quad h_{sub}(6,4)=\tilde{h}_{sub}(6,4)=21$.
\end{theorem}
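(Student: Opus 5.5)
The statement follows the paper's usual pattern: a SAT/ASP computation gives the signotope value $\tilde{h}_{sub}(6,q)$, and the linear subreduction method realizes a maximal signotope. By the inequality $h_{sub}(6,q)\le\tilde{h}_{sub}(6,q)$, an explicit point set of size $\tilde{h}_{sub}(6,q)-1$ with no admissible hexagon then proves equality. I will therefore carry out three tasks for each $q\in\{2,3,4\}$.

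\textbf{Step 1: logical model.} I adapt the ASP program used in the proof of Theorem~\ref{h61}. The eight rules enumerating the combinatorial types of a convex hexagon $A<B<C<D<E<F$ (cups/caps along the $x$-order) are kept. The density test in each rule is changed. For a hexagon with vertices $v_1,\dots,v_6$ in cyclic order, the three quadrilaterals obtained by deleting an opposite pair are $v_1v_2v_4v_5$, $v_2v_3v_5v_6$ and $v_3v_4v_6v_1$.

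Each such quadrilateral is convex, so its interior count is the sum of counts in two triangles of a triangulation. These counts are available through the predicate \texttt{i(A,B,C,X)}, exactly as in the \texttt{cv} rules of the main model.

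I introduce an auxiliary predicate $q_4(a,b,c,d,m)$ meaning ``the convex quadrilateral $abcd$ has exactly $m$ interior points'', for $m\in\{0,q\}$. Each hexagon rule then forbids the configuration whenever one of its three diagonal quadrilaterals satisfies $q_4(\cdot,0)$ or $q_4(\cdot,q)$. Exact counts are needed here, unlike the ``at most $k$'' \texttt{tr} predicate. I encode them with \texttt{\#count} equalities on the triangle pieces, requiring the two pieces to sum to $m$.

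Identifying the cyclic order of the six vertices in each of the eight rules is the step that needs care. I will get it from which of the orientation literals form the upper and lower chains. As a sanity check I will confirm that the modified program with $q$ huge reproduces $\tilde h(6)$-type behaviour on small $N$.

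\textbf{Step 2: upper bounds.} I run clingo with the \texttt{frumpy} preset and symmetry breaking for $N=18,20,21$ for $q=2,3,4$ respectively, and expect \texttt{UNSATISFIABLE}. For $N-1$ I expect \texttt{SATISFIABLE}. This gives $\tilde h_{sub}(6,q)$ exactly, and hence $h_{sub}(6,q)\le\tilde h_{sub}(6,q)$.

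The main obstacle is runtime at $N=21$ for $q=4$. Since the $h(6,1)$ check at $N=18$ took 190 minutes, this instance may be far heavier. If it does not finish, I would decompose as in Section~\ref{sec:results_nc}. There the coloring was fixed; with no colors here, I would instead fix the orientations of triples within an initial segment (e.g.\ the triples $L_{1bc}$ are already fixed, so fix $L_{2bc}$ for small $b,c$) and solve the pieces in parallel.

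\textbf{Step 3: lower bounds.} I feed the full formula for $N=17,19,20$ to Z3 in \texttt{xgrid} mode, trying the linear grid first and grid No.~4 if that stalls. This should produce integer coordinates. I then verify each point set independently by direct enumeration of all convex hexagons and their three diagonal quadrilaterals. Such a set shows $h_{sub}(6,q)>N$, which closes the chain of equalities.
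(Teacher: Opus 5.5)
Your plan is essentially the paper's proof. The upper bounds come from \texttt{UNSATISFIABLE} runs of the quadrilateral-based signotope program at $N=18,20,21$, and for $q=3,4$ the matching lower bounds come from linear-subreduction realizations on $19$ and $20$ points, found on the exponential abscissa grid with base $4$. The only minor difference is that for $q=2$ the paper runs no new search: it reuses its earlier $17$-point set whose unique convex hexagon is a $\varhexagon_1$, and analyses that set to obtain $h_{sub}(6,2)>17$.
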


\begin{proof}
The logical program for verifying the values of $\tilde{h}_{sub}(6,q)$ and the logs of the runs with the status \texttt{UNSATISFIABLE} are available in the project repository\footnote{\url{https://github.com/koshelevv/Erdos-Szekeres/tree/main/hexagons/original_sub4}}. The total computation time (in CPU Hours) was:
\begin{lstlisting}[language=bash, basicstyle=\small\ttfamily]
hexagons.sub4.q2.18.log: CPU Time : 16.70 h
hexagons.sub4.q3.20.log: CPU Time : 313.95 h
hexagons.sub4.q4.21.log: CPU Time : 5135.51 h
\end{lstlisting}

Based on the analysis of the previously constructed example of 17 points with a unique hexagon of type $\varhexagon_1$, the equality $h_{mod}(6,2) = h_{ex}(6,\{0,2\}) = h_{sub}(6,2)=18$ is established.

To confirm that $h_{sub}(6,3)=20$ and $h_{sub}(6,4)=21$, the \textbf{linear subreduction method} was applied on an exponential abscissa grid. The coordinates of geometric realizations for the corresponding extremal configurations were found:

\textbf{19 points ($q=3$):}
{\small ($-$65536, $-$6779576), ($-$16384, $-$1705174), ($-$4096, $-$426607), ($-$1024, $-$306602), ($-$256, $-$29548), ($-$64, 18492168), ($-$16, 564027), ($-$4, 565381), ($-$1, 566107), (0, $-$301884), (1, $-$2975), (4, $-$1), (16, $-$1421), (64, 3569), (256, $-$256602), (1024, $-$120759), (4096, 422606), (16384, 0), (65536, 7);}

\textbf{20 points ($q=4$):}
{\small ($-$262144, $-$110327839), ($-$65536, 131072), ($-$16384, 47544316), ($-$4096, 39608554), ($-$1024, 36175252), ($-$256, 63101785), ($-$64, 36914751), ($-$16, 36929496), ($-$4, 35620882), ($-$1, 36952837), (1, 72112362), (4, 94007251), (16, 21270346), (64, 36989358), (256, 37133021), (1024, 36301986), (4096, 34612949), (16384, 26548716), (65536, 0), (262144, $-$110761442).}
\end{proof}

Using the ASP solver, abstract signotopes on 19 elements were found containing the subset of types $\{\varhexagon_1, \varhexagon_2, \varhexagon_4\}$, and on 20 elements, containing $\{\varhexagon_1, \varhexagon_2, \varhexagon_3, \varhexagon_5\}$. These examples allow establishing the following exact values of the combinatorial functions:
\begin{gather*}
\tilde{h}_{mod}(6,3) = \tilde{h}_{ex}(6,\{0,3\}) = \tilde{h}_{sub}(6,3) = 20, \\
\tilde{h}_{mod}(6,4) = \tilde{h}_{ex}(6,\{0,4\}) = \tilde{h}_{sub}(6,4) = 21.
\end{gather*}

Since during the computational experiment (totaling more than 12 months of CPU time), the linear subreduction method failed to construct geometric realizations for signotopes excluding both $\{\varhexagon_0, \varhexagon_3\}$ at $N=19$ and $\{\varhexagon_0, \varhexagon_4\}$ at $N=20$, only interval estimates are currently valid for the plane:
\[ 19 \le h_{mod}(6,3) \le 20, \quad 20 \le h_{mod}(6,4) \le 21. \]

\begin{theorem}
The following values hold for the existence functions of convex hexagons with a given set of interior points:
\begin{gather*}
\tilde{h}_{ex}(6,\{0,1,2\}) = \tilde{h}_{ex}(6,\{0,1,3\}) = \tilde{h}_{ex}(6,\{0,1,4\}) = g(6) = 17; \\
h_{ex}(6,\{0,3,4\}) = \tilde{h}_{ex}(6,\{0,3,4\}) = 19, \quad h_{ex}(6,\{0,4,5\}) = \tilde{h}_{ex}(6,\{0,4,5\}) = 20.
\end{gather*}
\end{theorem}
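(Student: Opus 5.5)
The statement splits into three kinds of claims, and I would treat each with the tools of this section: reduction to earlier results for the equalities with $g(6)$, UNSAT runs for the upper bounds, and explicit realizations for the lower bounds.

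\textbf{The first line.} For each $Q\in\{\{0,1,2\},\{0,1,3\},\{0,1,4\}\}$ the lower bound is $\tilde{h}_{ex}(6,Q)\ge h_{ex}(6,Q)\ge g(6)=17$. This holds because the 16-point Erdős--Szekeres configuration has no convex hexagon at all. For $\{0,1,2\}$ the upper bound is exactly the signotope version of Theorem~\ref{h61}: the ASP run with $k=2$, $n=17$ is UNSAT, so every signotope on 17 elements contains some $\varhexagon_k$ with $k\le2$.

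For $\{0,1,3\}$ and $\{0,1,4\}$ I would modify the hexagon program from the proof of Theorem~\ref{h61}. The triangle filter $\mathtt{tr}$ is replaced by an exact count of interior points of the hexagon, namely the sum over its triangulation $ACE$, $ABC$, $CDE$, $EFA$. The forbidden counts are then restricted to $Q$, and I would run clingo at $n=17$ expecting UNSAT.

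To make the exact counts cheap, I would follow the remark preceding the theorem: count through the constituent quadrilaterals. One could also try to reason structurally. A hexagon with $2$ interior points always spawns a sub-hexagon with fewer interior points, mirroring the argument $h(5,k\ge1)=9$. If that could be made to work, $\varhexagon_2$ would be replaceable by $\varhexagon_3$ or $\varhexagon_4$. I would not rely on it and would use the computation instead.

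\textbf{The second line, upper bounds.} I would show $\tilde{h}_{ex}(6,\{0,3,4\})\le19$ and $\tilde{h}_{ex}(6,\{0,4,5\})\le20$ by the same modified program with $Q=\{0,3,4\}$ at $n=19$ and $Q=\{0,4,5\}$ at $n=20$. Both runs should be UNSAT. Together with $h_{ex}\le\tilde{h}_{ex}$, this gives the upper bounds for the plane as well.

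\textbf{The second line, lower bounds.} The lower bounds need point sets: 18 points avoiding $\varhexagon_0$, $\varhexagon_3$, $\varhexagon_4$, and 19 points avoiding $\varhexagon_0$, $\varhexagon_4$, $\varhexagon_5$. I would first try the configurations already listed above. The 18-point set containing only $\{\varhexagon_1,\varhexagon_2\}$ immediately settles $\{0,3,4\}$. The 19-point set containing only $\{\varhexagon_1,\varhexagon_2,\varhexagon_3\}$ settles $\{0,4,5\}$. Both sets are in the plane, so $h_{ex}\ge19$ and $h_{ex}\ge20$ respectively, and these match the upper bounds. If those sets were unavailable, I would search with the linear subreduction method, starting on the grid $x_i=i$ and switching to an exponential grid if needed.

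\textbf{Main obstacle.} I expect the hardest part to be the UNSAT verifications at $n=19$ and $n=20$ with exact interior counting. In the $h_{sub}$ case, 20--21 points already cost hundreds to thousands of CPU hours. I would first try clingo with \texttt{frumpy} and the \texttt{\#heuristic} directive. If that stalls, I would decompose by fixing the signotope orientations on an initial segment, analogous to the color-prefix decomposition used earlier, and solve the subproblems with Glucose.
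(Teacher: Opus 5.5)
Your proposal is correct and follows the paper's proof. The upper bounds come from UNSAT ASP runs at $n=17,17,17,19,20$, and the lower bounds from the 16-point Erd\H{o}s--Szekeres set and the previously listed 18-point $\{\varhexagon_1,\varhexagon_2\}$ and 19-point $\{\varhexagon_1,\varhexagon_2,\varhexagon_3\}$ configurations; the paper reruns $Q=\{0,1,2\}$ rather than citing Theorem~\ref{h61}, which is immaterial. Your speculative aside is false: a $\varhexagon_2$ need not contain a hexagon with fewer interior points, since otherwise $h(6,1)=h(6,2)=17$, contradicting $h(6,1)=18$, so you were right not to rely on it.
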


\begin{proof}
The logical program for verifying $\tilde{h}_{ex}(6,Q)$ and the corresponding logs (\texttt{UNSATISFIABLE}) are presented in the project repository\footnote{\url{https://github.com/koshelevv/Erdos-Szekeres/tree/main/hexagons/original_ex}}:
\begin{lstlisting}[language=bash, basicstyle=\small\ttfamily]
hexagons.Q_012.17pt.log: CPU Time : 4.13 h
hexagons.Q_013.17pt.log: CPU Time : 6.07 h
hexagons.Q_014.17pt.log: CPU Time : 10.86 h
hexagons.Q_034.19pt.log: CPU Time : 188.42 h
hexagons.Q_045.20pt.log: CPU Time : 2227.30 h
\end{lstlisting}
\end{proof}

An analysis of the signotope space and the found realizations allows us to formulate a structural theorem describing the mandatory presence of certain types of hexagons depending on the cardinality of the set $\mathcal{X}$.

\begin{theorem}[on the structure of small configurations]
Let $\mathcal{X}$ be a set of points in the plane in general position. Then the following statements hold:
\begin{enumerate}
\item If $|\mathcal{X}| = 17$, then $\mathcal{X}$ contains either $\varhexagon_0$, or $\varhexagon_1$, or simultaneously $\{\varhexagon_2, \varhexagon_3, \varhexagon_4\}$.
\item If $|\mathcal{X}| = 18$, then $\mathcal{X}$ contains either $\varhexagon_0$, or simultaneously $\{\varhexagon_1, \varhexagon_2\}$.
\item If $|\mathcal{X}| = 19$, then $\mathcal{X}$ contains either $\varhexagon_0$, or simultaneously $\{\varhexagon_1, \varhexagon_2\}$ and at least one hexagon from $\{\varhexagon_3, \varhexagon_4\}$.
\item If $|\mathcal{X}| = 20$, then $\mathcal{X}$ contains either $\varhexagon_0$, or simultaneously $\{\varhexagon_1, \varhexagon_2, \varhexagon_3\}$ and at least one hexagon from $\{\varhexagon_4, \varhexagon_5\}$.
\item If $|\mathcal{X}| = 21$, then $\mathcal{X}$ contains either $\varhexagon_0$, or simultaneously $\{\varhexagon_1, \varhexagon_2, \varhexagon_3, \varhexagon_4\}$.
\end{enumerate}
\end{theorem}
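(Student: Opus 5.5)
We will derive each item of the structural theorem from the computed values stated earlier: $h(6,1)=18$, $h(6,2)=17$ (Theorem~\ref{h61}), the $\tilde h_{ex}$ values, and the $h_{sub}$/$h_{mod}$ bounds. Plus we need the monotonicity observation that adding points cannot destroy the guarantees. Every clause ``$\mathcal{X}$ contains either $\varhexagon_0$ or simultaneously $\{\varhexagon_{a},\dots\}$'' has the same form. We assume $\mathcal{X}$ has no $\varhexagon_0$, and then for each required type $\varhexagon_j$ we exhibit an $\tilde h_{ex}(6,Q)$ value with $Q=\{0,j\}$, or $Q=\{0,j,j'\}$ for the ``at least one of'' clauses, that is at most $|\mathcal{X}|$.

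The ingredients are as follows.

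\begin{enumerate}
\item For $|\mathcal{X}|=17$: $h_{ex}(6,\{0,1,2\})=17$ gives $\varhexagon_0$, $\varhexagon_1$ or $\varhexagon_2$. Likewise $\{0,1,3\}$ gives $\varhexagon_0$, $\varhexagon_1$ or $\varhexagon_3$, and $\{0,1,4\}$ gives $\varhexagon_0$, $\varhexagon_1$ or $\varhexagon_4$. If neither $\varhexagon_0$ nor $\varhexagon_1$ is present, all three of $\varhexagon_2,\varhexagon_3,\varhexagon_4$ must occur.
\item For $|\mathcal{X}|=18$: $h(6,1)=18$ yields $\varhexagon_1$ in the absence of $\varhexagon_0$. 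Also $h_{sub}(6,2)=18$, hence $h_{ex}(6,\{0,2\})\le 18$, yields $\varhexagon_2$.
\item For $|\mathcal{X}|=19$: item 2 applies to any 18-point subset. Here I must check that a $\varhexagon_j$ in a subset remains a $\varhexagon_j$ in $\mathcal{X}$, which is false in general, since the added point may lie inside. So instead I will use $h_{ex}(6,\{0,1\})=18\le 19$ and $h_{ex}(6,\{0,2\})\le18\le19$ directly on $\mathcal{X}$; these functions are defined for ``any set of at least'' that size. Then $h_{ex}(6,\{0,3,4\})=19$ gives one of $\varhexagon_3,\varhexagon_4$.
\item For $|\mathcal{X}|=20$: $h_{sub}(6,3)=20$ gives $\varhexagon_3$, the earlier values give $\varhexagon_1$ and $\varhexagon_2$, and $h_{ex}(6,\{0,4,5\})=20$ gives $\varhexagon_4$ or $\varhexagon_5$.
\item For $|\mathcal{X}|=21$: $h_{sub}(6,4)=21$ gives $\varhexagon_4$, and the rest follow as before.
\end{enumerate}

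The only subtle point is the hierarchy between ``exactly $j$ interior points'' in $h_{ex}$ and ``at most $k$'' in $h(6,k)$. Theorem~\ref{h61} gives a hexagon with at most one interior point, so absent $\varhexagon_0$ it is exactly $\varhexagon_1$. The $h_{sub}$ values give a quadrilateral inside a hexagon with $0$ or $q$ interior points, so I would pass through the displayed chain $h_{ex}(6,\{0,q\})\le h_{sub}(6,q)$, which the paper states as obvious.

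The main obstacle is therefore not logical but evidentiary. The $\tilde h_{ex}$ statements for $\{0,1,2\},\{0,1,3\},\{0,1,4\}$ are stated only for signotopes. Since $h\le\tilde h$, they transfer to point sets, which suffices for upper-bound use. The same transfer applies to $h_{sub}(6,3)$ and $h_{sub}(6,4)$, which are given as signotope UNSAT results. Thus every step is an upper-bound application of $h_*\le\tilde h_*$, and no realizability question arises.

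Finally, the remark that $\varhexagon_2$ cannot be unique at 17 points follows from item 1 and serves as a consistency check. The realizations listed before the theorem show that the alternatives in items 2–5 are not vacuous, although sharpness is not part of the statement.
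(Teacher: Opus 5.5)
Your derivation is correct and matches how the paper obtains the theorem (the paper gives no separate proof, only ``an analysis of the signotope space''). It combines the signotope values $\tilde h_{ex}(6,Q)$ for $Q=\{0,1,2\},\{0,1,3\},\{0,1,4\},\{0,3,4\},\{0,4,5\}$ with $h(6,1)=18$ and with $\tilde h_{sub}(6,q)$ through $h_{ex}(6,\{0,q\})\le h_{sub}(6,q)$, and transfers everything to the plane by $h_*\le\tilde h_*$. One remark: the ``at least'' reading you rely on in item 3 holds because you can delete a convex-hull vertex (in signotope terms, the last point), which is never interior to a polygon spanned by the other points; this preserves all interior counts, so the subset route you dismissed actually works.
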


\textbf{Corollary.} {\it There is no set of 17 points in general position containing $\varhexagon_2$ as the unique convex hexagon.}

\vskip 0.2cm
\noindent \textbf{Note.} The formulated theorems are strictly proven for abstract signotopes. The fact that, during an extensive computational experiment, we failed to find a geometric realization of {\it anomalous} signotopes (for example, those containing $\varhexagon_4$ in the absence of $\varhexagon_0$ and $\varhexagon_3$ for $N=19$) indicates that stronger statements may hold for point sets in the plane. In particular, we conjecture the mandatory presence of either $\varhexagon_0$ or $\varhexagon_3$ as early as $N=19$. Investigating this gap between the combinatorial structure of abstract and realizable configurations remains an open problem.

\subsubsection{Symmetric Configurations and Lower Bounds for $h_{mod}(6, q)$}

In concluding this section, let us consider symmetric point sets. We have established that any set of $N \ge 18$ points possessing axial symmetry is guaranteed to contain a convex empty hexagon $\varhexagon_0$. The low value of this upper bound makes axially symmetric configurations ineffective when searching for extremal examples for the function $h_{mod}(6, q)$.

To verify this fact, we use our logical program for $h(6,k)$, adding a single line that restricts the search space to only axially symmetric signotopes:

\begin{lstlisting}[language=bash]
for n in 17 18 19; do
    (cat ES_hexagons.lp; echo ':- l(A,B,C,R), l(n+1-C,n+1-B,n+1-A,-R).') | clingo -c k=0 -c n=$n -c sb=off;
done
SATISFIABLE
UNSATISFIABLE
UNSATISFIABLE
\end{lstlisting}

To construct examples with a larger number of points, more flexible structures were used, particularly configurations with $k$-fold symmetry. Using the local stochastic search method, symmetric sets for large $N$ were found containing exclusively hexagons of types $\{\varhexagon_1, \dots, \varhexagon_{q-1}\}$. The corresponding coordinates and visualizations are available in the project repository\footnote{\url{https://github.com/koshelevv/Erdos-Szekeres/tree/main/hexagons}}. These examples allow us to establish lower bounds for the function $h_{mod}(6, q)$, presented in the table below. For these examples, we do not guarantee that $N$ is the maximum possible for a fixed $q$.

{\bf $\{\varhexagon_1, \dots, \varhexagon_6\}$ --- 21 points:}
{\small ($-$76,$-$26) (15,78) (60,$-$52) ($-$22,229) (209,$-$95) ($-$187,$-$133) ($-$26,218) (201,$-$86) ($-$175,$-$131) ($-$35,182) (175,$-$60) ($-$140,$-$121) ($-$209,$-$217) ($-$83,289) (292,$-$72) ($-$300,$-$249) ($-$65,384) (365,$-$135) ($-$174,$-$127) ($-$22,214) (196,$-$87).}

{\bf $\{\varhexagon_1, \dots, \varhexagon_7\}$ --- 22 points:}
{\small (0,0) ($-$2,$-$117) ($-$100,60) (102,56) (4,$-$169) ($-$148,81) (144,87) (17,$-$78) ($-$76,24) (59,53) ($-$107,40) (88,72) (18,$-$112) ($-$252,$-$223) ($-$67,329) (319,$-$106) ($-$273,$-$241) ($-$72,356) (345,$-$115) (1,$-$83) ($-$72,40) (71,42).}

\begin{table}[h]
\centering
\small
\begin{tabular}{|c|ccccccccccccccccc|}
\hline
$q$ & 2 & 3 & 4 & 5 & 6 & 7 & 8 & 9 & 10 & 11 & 12 & 13 & 14 & 15 & 16 & 17 & 18 \\ \hline
$h_{mod}(6, q) \ge$ & 18 & 19 & 20 & 21 & 21 & 22 & 23 & 23 & 23 & 23 & 23 & 23 & 26 & 26 & 26 & 26 & 30 \\ \hline
\end{tabular}
\caption{Lower bounds for the fourth Erdős--Szekeres type problem ($n=6$).}
\label{tab:mod_bounds}
\end{table}

It should be noted that the value of 26 for $14 \le q \le 17$ is due to the example we found of 25 points with 3-fold symmetry for the problem $h_{nc}(4,0; 3,0)$, containing only types $\{\varhexagon_1, \dots, \varhexagon_{13}\}$. For $q \ge 18$, the estimates coincide with the classical value $h(6) = 30$, as the well-known Overmars configuration of 29 points \cite{Over} contains only hexagons of types $\{\varhexagon_1, \dots, \varhexagon_{17}\}$.

\section{Geometric Ramsey Numbers}

Research in this area was initiated by Bárány and Károlyi \cite{BK_triangles}, who formulated the following fundamental problem:

\begin{quote}
{\it Is it true that for any integer $c$, there exists a minimum positive integer $N$ such that for any set of $N$ points in the plane in general position and an arbitrary $c$-coloring of the edges of the complete graph with vertices at these points, there necessarily exists a monochromatic empty triangle?}
\end{quote}

Batista-Santiago et al. in \cite{BS_general} gave a comprehensive negative answer for the case $c \ge 3$, proving that the sought number $R_{EC}(3, 3, 3)$ (and above) does not exist. For two colors ($c=2$), they established the interval $17 \le R_{EC}(3, 3) \le h(6)=30$. They also proposed a generalization of the problem for polygons of arbitrary size: finding the minimum number $R_{EC}(s, t)$ guaranteeing, in any two-color edge coloring, the existence of an empty convex $s$-gon of the first color or an empty convex $t$-gon of the second color. The authors obtained the lower bound $57 \le R_{EC}(4, 4)$.

In the present study, we introduce additional values for further classification of conditions: $R_{C}(s, t)$ (where the emptiness condition is not mandatory), as well as $R_{ENC}(s, t)$ and $R_{NC}(s, t)$ (where the convexity condition is not mandatory). From the definitions, the following relations follow directly:
\begin{equation}
R_{EC}(2, t) = h(t), \quad R_{C}(2, t) = g(t), \quad R_{ENC}(2, t) = R_{NC}(2, t) = t, \quad R_{C}(3, 3) = R_{NC}(3, 3) = 6.
\end{equation}

Calculating the value of $R_{NC}(s, t)$ is equivalent to the problem in ordered graph theory of searching for monochromatic cycles without self-intersections. The exact value $R_{NC}(s, t) = 2st - 3s - 3t + 6$ was established in \cite{K_arcs} (upper bound) and \cite{B_ordered} (lower bound). We have noticed that the argumentation from \cite{K_arcs} allows extending this result to broader structures — kipas graphs — while maintaining the original formula.

Using the linear subreduction method, we obtained new lower bounds for the case of monochromatic convex polygons without the additional emptiness condition: $R_C(3, 4) \ge 11$, $R_C(4, 4) \ge 23$, and $R_C(3, 5) \ge 25$. The corresponding point configurations are presented below:

\noindent\textit{Example for $R_C(3, 4)$ (10 points):}\\ {\small
(0,0) (1,0) (2,$-211$) (3,$-421$) (4,$-671$) (5,$-839$) (6,$-1044$) (7,$-1248$) (8,$-1458$) (9,$-1655$)}

\noindent\textit{Example for $R_C(4, 4)$ (22 points):} 
\coordinates{(0,605219576) (1,547491917) (2,$-38432236$) (3,$-37864720$) (4,$-8094$) (5,327037131) (6,$-22919212$) (7,$-19069521$) (8,$-35026470$) (9,$-43780784$) (10,$-7497592$) (11,$-3645043$) (12,218931) (13,0) (14,$-3332$) (15,13464851) (16,17507537) (17,$-4512052$) (18,23345651) (19,$-15231$) (20,$-1528709$) (21,0)}

\noindent\textit{Example for $R_C(3, 5)$ (24 points):}
\coordinates{(0,$-184$) (1,$-5$) (2,71) (3,19085) (4,45983) (5,588939) (6,0) (7,$-176$) (8,153581) (9,8474) (10,7536) (11,$-281760$) (12,5655) (13,4707) (14,$-717104$) (15,3346) (16,2669) (17,1995) (18,1325) (19,$-774$) (20,$-12$) (21,$-2886$) (22,12) (23,22)}

Calculating signotopic upper bounds for these problems is an extremely labor-intensive process. During the computational experiment using the {\tt clingo} system, we were only able to verify the value $R_C(3, 4) = 11$. Regarding the other two values, no solution was obtained even after 12 months of continuous solver operation, which leads us to put forward the following hypothesis: $R_C(4, 4) = 24$, $23 \le R_C(3, 5) \le 25$.

The central result of this section is the following theorem.
\begin{theorem}
$R_{EC}(3, 3) = 21$.
\end{theorem}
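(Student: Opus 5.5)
The plan has two parts, following the scheme used for every exact value in this paper: a signotope upper bound $\tilde{R}_{EC}(3,3)\le 21$, obtained as an \texttt{UNSAT} certificate, and a planar lower bound $R_{EC}(3,3)\ge 21$, obtained from an explicit set of 20 points realized by the linear subreduction method. Since $R_{EC}(3,3)\le\tilde{R}_{EC}(3,3)$ holds for the same reason as the fundamental inequality $h_*\le\tilde h_*$, the two bounds together give the theorem.

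For the upper bound, I would extend the ASP model of the previous sections by replacing point colours with edge colours. Introduce a predicate $e(A,B,Z)$ with $Z\in\{1,2\}$ for each pair $A<B$, generated by a choice rule with exactly one colour per pair. Keep the signotope axioms and the symmetry-breaking rule $l(0,B,C,1)$. Keep the interior predicate $i(A,B,C,X)$ and the emptiness test $tr(A,B,C,0)$. Then add the single constraint
\texttt{:- e(A,B,Z), e(B,C,Z), e(A,C,Z), A<B, B<C, tr(A,B,C,0).}
Colour symmetry allows fixing the colour of the edge $\{0,1\}$. I would also put a domain heuristic on the edge variables, in analogy with \texttt{\#heuristic c(A,Z)}. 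The computation must give \texttt{SAT} for $n=20$ and \texttt{UNSAT} for $n=21$. I expect the \texttt{UNSAT} run at $n=21$ to be the main obstacle. There are $\binom{21}{2}=210$ edge variables on top of $1330$ orientation variables, and the plain solver may not terminate. If that happens, I would decompose as in the $h_{nc}(4,0;4,0)$ case. One option is to fix the colours of all edges within the first 6 or 7 points, modulo colour swap, discarding any assignment that already contains a monochromatic empty triangle, and send each subformula to Glucose or Kissat. A cheaper alternative is to split on the orientation structure near point $0$, using the fact that $0$ is then a convex-hull vertex and every empty triangle $0bc$ with consecutive $b,c$ is forced.

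For the lower bound, I would run the SMT version of the same formula with \texttt{xgrid=1} first, falling back to the exponential grid No.~4 if needed. The run should produce integer ordinates for 20 points together with an edge 2-colouring containing no monochromatic empty triangle. The realized configuration can then be checked independently by directly computing all $\binom{20}{3}$ triangles, their emptiness, and the colours of their edges. This realizability step should be comparatively easy, because the admissible signotopes are likely plentiful at $n=20$.
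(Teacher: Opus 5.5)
Your plan matches the paper's proof. The upper bound is the same clingo model: edge-colour atoms, the signotope axioms with symmetry breaking at the first point, and a single constraint forbidding monochromatic empty triangles. The paper ran it directly at $n=21$ to \texttt{UNSAT} in about $1.36\cdot 10^6$ CPU seconds, with no decomposition or edge-colour fixing. The lower bound is likewise an explicit 20-point set with a good edge 2-colouring. The only difference is that the paper's example is a 5-fold rotationally symmetric configuration with a symmetric colouring, not a grid-based linear-subreduction output. This does not affect the argument, since the example is verified directly.
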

\begin{proof}
To prove the upper bound, the following logical program was used:
\begin{lstlisting}[language=clingo]
pt(1..n).

ins(tr,t1,tr1;tr,t2,tr2;tr,t3,tr3).

1{c(A,B,Z): ins(_,Z,I),I=0..99}1 :- pt(A),pt(B), A<B.
1{l(A,B,C,R): R=(-1;1)}1 :- pt(A),pt(B),pt(C), A<B,B<C.

% GEOMETRIC CONSTRAINTS
:- l(A,B,C,R), l(A,C,D,-R), l(B,C,D,R).
:- l(A,B,C,R), l(A,B,D,-R), l(A,C,D,R).
:- l(A,B,C,R), l(A,B,D,-R), l(B,C,D,R).
:- l(A,B,D,R), l(A,C,D,-R), l(B,C,D,R).
:- l(1,B,C,-1), sb!=off.

% DEFINITION OF INTERIOR POINTS
i(A,B,C,X) :- l(A,X,B,R), l(A,X,C,-R), B<C.
i(A,B,C,X) :- l(B,X,C,R), l(A,X,C,-R), A<B.

tr(A,B,C,I) :- pt(A),pt(B),pt(C), A<B,B<C, {i(A,B,C,X)}<=I, ins(_,_,I), I=0..99.

:- ins(tr,Z,I), A<B,B<C, c(A,B,Z), c(A,C,Z), c(B,C,Z), tr(A,B,C,I).

#show l/4.
#show c/3.
\end{lstlisting}

\begin{lstlisting}[language=bash]
clingo ES_Ramsey.lp --configuration=frumpy --sat-p=3 -c tr1=0 -c tr2=0 -c n=21
UNSATISFIABLE
CPU Time : 1356782.996s
\end{lstlisting}

We found a configuration of 20 points exhibiting 5-fold symmetry. For this set, there exists a symmetric edge coloring that does not contain monochromatic empty triangles. Below are the obtained integer coordinates\footnote{the edge coloring is available on \url{https://github.com/koshelevv/Erdos-Szekeres/tree/main/geometric_Ramsey_numbers}}:
\begin{center}
($-$1583,$-$2563) (1948,$-$2298) (2787,1143) ($-$226,3004) ($-$2927,714) \\
($-$787,$-$966) (676,$-$1047) (1204,319) (69,1244) ($-$1162,450) \\
($-$661,731) ($-$899,$-$403) (105,$-$980) (964,$-$203) (491,855) \\
($-$381,110) ($-$222,$-$328) (244,$-$313) (373,135) ($-$13,396).
\end{center}
\end{proof}

Further development of the problem involves relaxing the emptiness condition for monochromatic triangles. Let $R_{EC}(3, k_1; 3, k_2)$ be the minimum number of points guaranteeing the presence of either a first-color triangle with at most $k_1$ interior points or a second-color triangle with at most $k_2$ interior points. The case $k=0$ corresponds to the classical emptiness condition, while $k = \infty$ completely removes the restriction on the number of interior points.

We have established the following exact values for these quantities:
\begin{itemize}
\item $R_{EC}(3, 0; 3, 1) = R_{EC}(3, 0; 3, \infty) = 17$;
\item $R_{EC}(3, 1; 3, 1) = 8$;
\item $R_{EC}(3, 1; 3, 2) = R_{EC}(3, 1; 3, \infty) = 7$;
\item $R_{EC}(3, 2; 3, 2) = R(3, 3) = 6$.
\end{itemize}

It should be noted that the value $R_{EC}(3, 2; 3, 2) = 6$ expectedly coincides with the classical Ramsey number $R(3,3)$. It is known that in any two-color edge coloring of $K_6$, there are at least two monochromatic triangles. In this case, only one of them can contain the maximum possible 3 interior points, while the second is guaranteed to contain no more than two.

\begin{lstlisting}[float, floatplacement=H, language=bash]
clingo-lpx ES_Ramsey.lp lpx --configuration=frumpy --sat-p=3 -c tr1=0 -c tr2=99 -c n=16
SATISFIABLE
y(1)=-2833/2 y(2)=-10409/8 y(3)=-4739/4 y(4)=1 y(5)=-954 y(6)=-43327/56 y(7)=-4818/7 y(8)=-4815/8
y(9)=-607 y(10)=8509/3 y(11)=59563/18 y(12)=2476273/648 y(13)=-801/4 y(14)=-1581/16 y(15)=0 y(16)=0

clingo-lpx ES_Ramsey.lp lpx --configuration=frumpy --sat-p=3 -c tr1=1 -c tr2=1 -c n=7
SATISFIABLE
y(1)=3/2 y(2)=1 y(3)=-1 y(4)=-13 y(5)=-6 y(6)=0 y(7)=0

clingo-lpx ES_Ramsey.lp lpx --configuration=frumpy --sat-p=3 -c tr1=1 -c tr2=99 -c n=6
SATISFIABLE
y(1)=5 y(2)=7/2 y(3)=-1 y(4)=3/2 y(5)=0 y(6)=0

clingo ES_Ramsey.lp --configuration=frumpy --sat-p=3 -c tr1=0 -c tr2=1 -c n=17
UNSATISFIABLE
clingo ES_Ramsey.lp --configuration=frumpy --sat-p=3 -c tr1=1 -c tr2=1 -c n=8
UNSATISFIABLE
clingo ES_Ramsey.lp --configuration=frumpy --sat-p=3 -c tr1=1 -c tr2=2 -c n=7
UNSATISFIABLE
\end{lstlisting}

\section{Conclusion}
\label{sec:concl}

In this paper, we have presented a comprehensive study of Erdős--Szekeres type problems using modern computational combinatorial analysis methods. The main result of the study is the establishment of several new exact values for the functions $h, h_{nc}$, and $h_{isl}$ for bicolored and multicolored point sets. In particular, the equality $h_{nc}(4,0; 4,0) = 26$ was proven for the first time.

Special attention was given to the convex hexagon problem. New exact values and coordinates were found for configurations with specific constraints on the number of interior points (functions $h_{mod}$, $h_{ex}$, and $h_{sub}$).

An important achievement in the field of geometric Ramsey numbers was the establishment of the exact value $R_{EC}(3,3) = 21$. The discovered configuration of 20 points with 5-fold symmetry and the verification of the upper bound using a SAT solver eliminate the previously existing gap within this range. Furthermore, we have obtained new lower bounds for Ramsey numbers without the emptiness condition: $R_C(4,4) \ge 23$ and $R_C(3,5) \ge 25$, expanding the understanding of the structure of extremal planar sets.

The theoretical analysis of the sets $S_k(N)$ allowed for the refinement of Dehnhardt's hypotheses regarding universal extremal configurations. We established the coincidence of configuration sets minimizing the number of convex and empty 3- and 4-gons for $N \le 11$ and their divergence from analogous sets for 5-gons for $N \ge 10$.

The methodological value of the work lies in the testing and comparison of different approaches to the logical encoding of geometric constraints. We have shown that the use of the ASP system {\tt clingo} in combination with specialized heuristics (presets {\it frumpy} and {\it crafty}) allows for the effective verification of complex configurations in their entirety, minimizing the need for manual decomposition. The developed linear subreduction method using exponential abscissa grids proved its efficiency, enabling the discovery of geometric realizations for the vast majority of found maximal signotopes.

Despite the progress achieved, the question of the existence and exact values of $h(4,0; 4,0)$ and $h_{isl}(4,0; 4,0)$ remains open. We see the further development of the proposed approach in the automation of decomposition processes and the integration of more powerful conflict-learning methods into SMT models, which could potentially overcome the current computational barriers for problems with $N > 30$ points.


\newpage

\begin{thebibliography}{99}

\bibitem{ES} P. Erdős, G. Szekeres, {\it A combinatorial problem in geometry}, Compositio Math., 2 (1935), 463--470.

\bibitem{Low} P. Erdős, G. Szekeres, {\it On some extremum problems in elementary geometry}, Ann. Univ. Sci. Budapest Eötvös Sect. Math., 3--4 (1961), 53--62.

\bibitem{E} P. Erdős, {\it Some more problems in elementary geometry}, Austral. Math. Soc. Gaz., 5 (1978), 52--54.

\bibitem{Sol} W. Morris, V. Soltan, {\it The Erdős--Szekeres problem on points in convex position}, Bulletin of the Amer. Math. Soc., 37     (2000), N4, 437--458.

\bibitem{Ram} F. P. Ramsey, {\it On a problem of formal logic}, Proc. London Math. Soc. Ser. 2, 30 (1930), 264--286.

\bibitem{TRam} R. L. Graham, B. L. Rothschild, J. H. Spencer, {\it Ramsey Theory}, 2nd ed., John Wiley \& Sons, NY, 1990.

\bibitem{Hall} M. Hall, Jr., {\it Combinatorial Theory}, Blaisdell, Waltham, Mass. 1967; Mir, Moscow, 1970.

\bibitem{chrom} O. Devillers, F. Hurtado, G. Károlyi, C. Seara, {\it Chromatic variants of the Erdős--Szekeres theorem}, Comput. Geom., 26 (2003), 193--208.

\bibitem{SL} G. Szekeres, L. Peters, {\it Computer solution to the 17-point Erdős--Szekeres problem}, ANZIAM J., 48 (2006), 151--164.

\bibitem{maric} F. Marić, {\it Fast formal proof of the Erdős--Szekeres conjecture for convex polygons with at most 6 points}, J. Autom. Reason., 62 (2019), 301--329.

\bibitem{scheucher2020} M. Scheucher, {\it Points, Lines, and Circles. Some Contributions to Combinatorial Geometry}, Ph.D. thesis, TU Berlin, 2020.

\bibitem{CG98} F. R. K. Chung, R. L. Graham, {\it Forced convex $n$-gons in the plane}, Discrete Comput. Geom., 19(3) (1998), 367--371.

\bibitem{KP98} D. Kleitman, L. Pachter, {\it Finding convex sets among points in the plane}, Discrete Comput. Geom., 19(3) (1998), 405--410.

\bibitem{TV98} G. Tóth, P. Valtr, {\it Note on the Erdős--Szekeres theorem}, Discrete Comput. Geom., 19(3) (1998), 457--459.

\bibitem{TV05} G. Tóth, P. Valtr, {\it The Erdős--Szekeres theorem: upper bounds and related results}, Combinatorial and Computational Geometry, MSRI Publications, 52 (2005), 557--568.

\bibitem{Vl15} G. Vlachos, {\it On a conjecture of Erdős and Szekeres}, arXiv:1505.07549 [math.CO], 2015.

\bibitem{MV15} H. N. Mojarrad, G. Vlachos, {\it An improved upper bound for the Erdős--Szekeres theorem}, arXiv:1510.06255 [math.CO], 2015.

\bibitem{NY15} S. Norin, Y. Yuditsky, {\it Erdős--Szekeres without induction}, Discrete Comput. Geom., 55 (2016), 963--971.

\bibitem{Suk17} A. Suk, {\it On the Erdős--Szekeres convex polygon problem}, J. Amer. Math. Soc., 30(4) (2017), 1047--1053.

\bibitem{HMPT20} A. F. Holmsen, H. Mojarrad, J. Pach, G. Tardos, {\it Two extensions of the Erdős--Szekeres theorem}, J. Combin. Theory Ser. A, 170 (2020), 105132.

\bibitem{Harb} H. Harborth, {\it Konvexe Fünfecke in ebenen Punktmengen}, Elem. Math., 33 (1978), 116--118.

\bibitem{Over0} M. Overmars, B. Scholten, I. Vincent, {\it Sets without empty convex 6-gons}, Bull. EATCS, 7 (1989), 160--168.

\bibitem{Over} M. Overmars, {\it Finding sets of points without empty convex 6-gons}, Discrete Comput. Geom., 29 (2003), 153--158.

\bibitem{Nicolas} C. Nicolas, {\it The empty hexagon theorem}, Discrete Comput. Geom., 38(2) (2007), 389--397.

\bibitem{Gerken} T. Gerken, {\it On empty convex hexagons in planar point sets}, Discrete Comput. Geom., 39 (2008), 239--272.

\bibitem{Valtr} P. Valtr, {\it On the empty hexagons}, Manuscript, 2006. URL: http://cuni.cz

\bibitem{MSb} V. A. Koshelev, {\it The Erdős--Szekeres problem on empty hexagons in the plane}, Modeling and Analysis of Information Systems, 16:2 (2009), 22--74 (in Russian).

\bibitem{h6} M. J. H. Heule, M. Scheucher, {\it Happy ending: An empty hexagon in every set of 30 points}, arXiv:2403.00737 [math.CO], 2024.

\bibitem{cadical} A. Biere, K. Fazekas, M. Fleury, N. Heisinger, {\it CaDiCaL, Kissat, Paracooba at the SAT Competition 2020}, SAT Competition (2020), 51--53.

\bibitem{h6_formal} B. Subercaseaux, W. Nawrocki, J. Gallicchio, C. Codel, M. Carneiro, M. J. H. Heule, {\it Formal Verification of the Empty Hexagon Number}, arXiv:2403.17370 [cs.LO], 2024.

\bibitem{Hort} J. D. Horton, {\it Sets with no empty 7-gons}, Canad. Math. Bull., 26 (1983), 482--484.

\bibitem{Sen} Bl. Sendov, {\it Compulsory configurations of points in the plane}, Fundam. Appl. Math., 1:2 (1995), 491--516 (in Russian).

\bibitem{Nyk} H. Nyklova, {\it Almost empty polygons}, Studia Sci. Math. Hungar., 40(3) (2003), 269--286.

\bibitem{Kosh} V. A. Koshelev, {\it Interior Points in the Erdős--Szekeres Theorems}, Math. Notes, 91:4 (2012), 542–557.

\bibitem{FPM} V. A. Koshelev, {\it Almost empty hexagons}, J. Math. Sci., 164:1 (2010), 60–81.

\bibitem{h61_comp} V. A. Koshelev, {\it Computer Solution of the Almost Empty Hexagon Problem}, Math. Notes, 89:3 (2011), 455–458.

\bibitem{Brass_ref} P. Brass, {\it Empty monochromatic fourgons in two-colored point sets}, Geombinatorics, 14(2) (2004), 5--7.

\bibitem{Friedman_ref} E. Friedman, {\it 30 two-colored points with no empty monochromatic convex fourgons}, Geombinatorics, 14(2) (2004), 53--54.

\bibitem{Gulik_ref} R. Van Gulik, {\it 32 two-colored points with no empty monochromatic convex fourgons}, Geombinatorics, 15(1) (2005), 32--33.

\bibitem{HS_ref} C. Huemer, C. Seara, {\it 36 two-colored points with no empty monochromatic convex fourgons}, Geombinatorics, 19(1) (2009), 5--6.

\bibitem{h46} V. Koshelev, {\it On Erdős--Szekeres problem and related problems}, 2009, \url{https://arxiv.org/abs/0910.2700}

\bibitem{basu} D. Basu, K. Basu, B. B. Bhattacharya, S. Das, {\it Almost empty monochromatic triangles in planar point sets}, Discrete Appl. Math., 210 (2016), 207--213.

\bibitem{cravioto} J. Cravioto-Lagos, A. C. González-Martínez, T. Sakai, J. Urrutia, {\it On almost empty monochromatic triangles and convex quadrilaterals in colored point sets}, Graphs Combin., 35 (2019), 1475--1493.

\bibitem{nconv4} O. Aichholzer, T. Hackl, C. Huemer, F. Hurtado, B. Vogtenhuber, {\it Large bichromatic point sets admit empty monochromatic 4-gons}, SIAM J. Discrete Math., 23(4):2147–2155, 2010.


\bibitem{liu} L. Liu, Y. Zhang, {\it Almost empty monochromatic quadrilaterals in planar point sets}, Math. Notes, 103:3 (2018), 415--429.

\bibitem{z3} L. de Moura, N. Bjørner, {\it Z3: An efficient SMT solver}, Proc. TACAS (2008), 337--340.

\bibitem{clingo} M. Gebser, R. Kaminski, B. Schaub, M. Ostrowski, {\it Clingo = ASP + Control}, Technical Report, Univ. Potsdam, 2024.

\bibitem{minisat} N. Eén, N. Sörensson, {\it An extensible SAT-solver}, Proc. SAT (2003), 502--518.

\bibitem{glucose} G. Audemard, L. Simon, {\it Predicting learnt clauses quality in modern SAT solvers}, Proc. IJCAI (2009), 399--404.

\bibitem{kissat} A. Biere, {\it Kissat at the SAT Competition 2020}, SAT Competition (2020), 54.

\bibitem{Dehnhardt1987} H. Dehnhardt, {\it Leere konvexe Vielecke in ebenen Punktmengen}, Ph.D. Thesis, TU Braunschweig, 1987.

\bibitem{Aichholzer2002} O. Aichholzer, F. Aurenhammer, H. Krasser, {\it On the dual span of the order types}, Discrete Comput. Geom., 28 (2002), 467--484.

\bibitem{Aichholzer2013} O. Aichholzer, {\it The order type database}, http://tugraz.at, 2013.

\bibitem{BDV} A. Bialostocki, P. Dierker, B. Voxman, {\it Some notes on the Erdős--Szekeres theorem}, Discrete Mathematics, Vol. 91, No. 3, pp. 231--238, 1991.

\bibitem{Caro} Y. Caro, {\it On the generalized Erdős--Szekeres conjecture — a new upper bound}, Discrete Mathematics, Vol. 160, No. 1-3, pp. 229--233, 1996.

\bibitem{KPT} G. Károlyi, J. Pach, G. Tóth. {\it A modular version of the Erdős--Szekeres theorem}, Studia Scientiarum Mathematicarum Hungarica, Vol. 38, pp. 245--259, 2001.

\bibitem{mod} V. A. Koshelev, {\it The Erdős--Szekeres Theorem and Congruences}, Math. Notes, 87:4 (2010), 537–542.

\bibitem{BK_triangles} I. Bárány, G. Károlyi, {\it Problems and results around the Erdős--Szekeres theorem}, Discrete and Computational Geometry, Japanese Conference (JCDCG 2000), Lecture Notes in Computer Science, Vol. 2098, pp. 199--205, 2001.

\bibitem{BS_general} C. Bautista-Santiago, J. Cano, R. Fabila-Monroy, C. Hidalgo Toscano, C. Huemer, J. Leaños, T. Sakai, J. Urrutia, {\it Ramsey numbers for empty convex polygons}, EuroCG. Ljubljana, Slovenia, March 16--18, 2015.

\bibitem{K_arcs} Gy. Károlyi, J. Pach, G. Tóth, P. Valtr, {\it Ramsey-type results for geometric graphs, II}, Discrete Comput. Geom. 20(3) (1998), 375–388.

\bibitem{B_ordered} M. Balko, J. Cibulka, K. Král, J. Kynčl. {\it Ramsey numbers of ordered graphs}, Electron. J. Combin., 27:P1.16, 2020.

\end{thebibliography}
\end{document}